       \def\@makefnmark{%
               \leavevmode
               \raise.9ex\hbox{\check@mathfonts
                       \fontsize\sf@size\z@\normalfont%
                               \@thefnmark}%
       }
\newtheorem{dfn}{Definition}[section]
\newtheorem{thm}[dfn]{Theorem}
\newtheorem{lem}[dfn]{Lemma}
\newtheorem{prp}[dfn]{Proposition}
\newtheorem{rmk}[dfn]{Remark}
\newtheorem{cor}[dfn]{Corollary}
\newtheorem{mthm}{Main\ Theorem}
\newcommand{\ind}{\mathrm{index}}
\newcommand{\sgn}{\mathrm{sgn}}
\newcommand{\der}{\mathrm{d}}
\title[The Roe-Higson index theorem in Riemannian surfaces]
{Toeplitz operators and the Roe-Higson type index theorem in Riemannian surfaces}
\author{Tatsuki SETO}
\address{Graduate School of Mathematics, Nagoya University, Furocho, Chikusaku, Nagoya, Japan}
\email{m11034y@math.nagoya-u.ac.jp}
\begin{document}

\begin{abstract}
We study a two dimensional analogue of the Roe-Higson index theorem for a partitioned manifold. 
We prove that Connes' pairing of some invertible element with Roe's cyclic one-cocycle 
coincides to the Fredholm index of a Toeplitz operator. 
In the proof of this paper, we use some properties of a circle and use Higson's argument. 
In the last section, 
there is a example of partitioned manifold, 
which is not a cylinder, with non-trivial pairing. 
\end{abstract}
\maketitle

\section*{Introduction}

Let $M$ be a partitioned complete Riemannian manifold, that is,  
there exist codimension zero submanifolds with boundary $M^{\pm}$ 
and a codimension one closed submanifold $N$ such that 
$M = M^{+} \cup M^{-}$ and $N = M^{+} \cap M^{-} = \partial M^{+} = \partial M^{-}$.
Let $S \to M$ be a Clifford bundle in the sense of 
\cite[Definition 3.4]{MR1670907} 
and let $D$ be the Dirac operator of $S$. 
Let $S_{N}$ be the restriction on $N$ of $S$. 
Then we can assume $S_{N}$ is a graded Clifford bundle over $N$. 
Let $D_{N}$ be the graded Dirac operator of $S_{N}$. 
In the above setting, 
we denote by $u_{D}$ the Cayley transform of $D$, 
that is, $u_{D} := (D-i)(D+i)^{-1}$. 
Then $u_{D}$ is a invertible element in the Roe algebra $C^{\ast}(M)$. 
In 
\cite{MR996446}, 
Roe defined the
odd index class odd-ind$(D) \in K_{1}(C^{\ast}(M))$, which is represented by $u_{D}$, 
and he also defined the cyclic one-cocycle $\zeta$, 
which is called the Roe cocycle, on a dense subalgebra of $C^{\ast}(M)$. 
Then Connes' pairing of cyclic cohomology with $K$-theory $\langle u_{D} , \zeta \rangle$ 
is agree up to constant with 
the Fredholm index of $D_{N}^{+}$ 
\cite{MR996446}. 
In 
\cite{MR1113688}, 
Higson gave a very simple and clear proof of this theorem. 
So we call that this theorem is the Roe-Higson index theorem. 

When $M$ is even dimensional, $\ind (D_{N}^{+})$ is always zero
in the Roe-Higson index theorem. 
So we want to get another non trivial formula. 
On the other hand, Toeplitz operators play a role of fundamental operators 
in the Atiyah-Singer index theorem on odd dimensional closed manifolds 
\cite[\S 20, 24]{MR679698}. 
Therefore we shall prove another index theorem with Toeplitz operators on $N$. 
Namely, we shall prove Connes' pairing of the Roe cocyle with another $K_{1}$-element 
$[u_{\phi}]$ (see Proposition \ref{u_phi})
is agree with the Fredholm index of a Toeplitz operator on $N$ (Theorem \ref{mainthm}). 

\begin{mthm}
Let $M$ be a partitioned oriented Riemannian surface. 
Let $S$ be a graded spin bundle over $M$ with $\mathbb{Z}_{2}$-grading $\epsilon$ 
and let $D$ be the graded Dirac operator on $M$. 
We assume $\phi \in C^{1}(M \,;\, GL_{l}(\mathbb{C}))$ 
satisfies $\| \phi \| < \infty$, $\| grad(\phi ) \| < \infty$ and $\| \phi^{-1} \| < \infty$. 
Define 
\[
u_{\phi} := 
(D + \epsilon )^{-1}
\begin{bmatrix} \phi & 0 \\ 0 & 1 \end{bmatrix}
(D + \epsilon ). 
\]
Then the following formula holds: 
\[ \left\langle [u_{\phi}]
, \zeta \right\rangle 
	= -\frac{1}{8\pi i}\ind (T_{\phi}). \]
\end{mthm}

The main strategy of the proof of our main theorem 
is to reduce the general two dimensional case 
to the $\mathbb{R} \times S^{1}$ case by similar argument of Higson in 
\cite{MR1113688}. 
To prove the $\mathbb{R} \times S^{1}$ case, we can use the family of very useful functions $\{ e^{ikx} \}_{k}$, 
which is 
orthonormal basis of $L^{2}(S^{1})$, 
all eigenvectors of Dirac operator $-i\partial / \partial x$ on $S^{1}$, 
and all representative elements of fundamental group $\pi_{1} (S^{1})$.  

The general dimensional case of our main theorem is in \cite{Setoeven}. 
It contains the $KK$-theoretic construction of $[u_{\phi}]$.

\section{Main Theorem}

\subsection{Elements of the $K_{1}$ group}

In this subsection, we define $K_{1}$ elements used in our main theorem. 

\begin{dfn}
Let $M$ be a oriented complete Riemannian manifold. 
We assume that the triple $(M^{+}, M^{-}, N)$ satisfies the following conditions: 
\begin{itemize}
\item $M^{+}$ and $M^{-}$ are two codimension zero submanifolds of $M$ with boundary, 
\item $M = M^{+} \cup M^{-}$, 
\item $N$ is a codimension one closed submanifold of $M$, 
\item $N = M^{+} \cap M^{-} = -\partial M^{+} = \partial M^{-}$.
\end{itemize}
Then we call that $(M^{+}, M^{-}, N)$ is a partition of $M$. 
Then we call that $M$ is a partitioned manifold. 
\end{dfn}

\begin{figure}[h]
 \begin{center}
  \includegraphics[width=70mm]{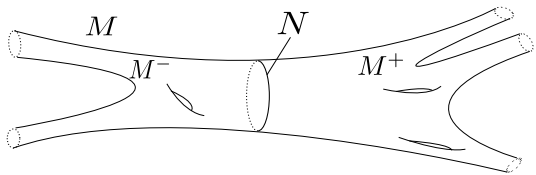}
  \caption{Partitioned manifold}
  \label{fig:integral}
 \end{center}
\end{figure}

In this paper, we assume that $M$ is a partitioned oriented two dimensional complete Riemannian manifold 
(i.e. complete Riemannian surface) 
and $(M^{+}, M^{-}, N)$ is a partition of $M$. 
Let $S$ be a graded spin bundle of $M$
\footnote{Every orientable surfaces are spin \cite[p.88]{MR1031992}. } 
with grading $\epsilon$ and a Clifford action $c$. 
Let $D$ be the graded Dirac operator of $S$. 
For simplicity, we assume that $M$ is connected and 
$N$ is isometric to the unit circle $S^{1}$. 
We also assume $c(\der / \der t)c(\der / \der x) = 
\begin{bmatrix} i & 0 \\ 0 & -i \end{bmatrix}$ on $(-\epsilon , \epsilon) \times N$ 
of tubular neighborhood of $N$, where 
$\{ \der / \der t , \der / \der x \}$ 
is a positively oriented orthonormal vector fields on $(-\epsilon , \epsilon) \times N$. 

\begin{rmk}
In particular, we assume $M$ is non compact, 
then $S$ is trivial bundle: $S = M \times \mathbb{C}^{2}$. 
Especially, if $M=\mathbb{R}^{2} \cong \mathbb{C}$ with standard metric, then 
\[
D = 2
\begin{bmatrix}
0 & -\partial / \partial \bar{z} \\
\partial / \partial z & 0 
\end{bmatrix}. 
\]
\end{rmk}

Let $\mathcal{L}(L^{2}(S))$ be the set of all 
bounded operators on $L^{2}$-sections of $S$. 
Let $C^{\ast}(M)$ be the Roe algebra of $M$, that is, 
$C^{\ast}(M)$ is the completion in $\mathcal{L}(L^{2}(S))$ of 
the $\ast$-algebra of all bounded integral operators on $L^{2}(S)$ with a smooth kernel and
finite propagation 
\cite[p.191]{MR996446}. 
We collect some well-known properties of the Roe algebra which we use. 

\begin{prp}\cite{MR1817560,MR996446}
We assume that $M$, $S$, and $D$ are as above. 
The following holds.
\begin{enumerate}[$(1)$]
\item Let $f \in C_{0}(\mathbb{R})$ be a continuous function on $\mathbb{R}$ with 
		vanishing at infinity and let $\lambda \in \mathbb{R}$. 
		Define $D' := D + \begin{bmatrix} 0 & \lambda \\ \lambda & 0 \end{bmatrix}$, 
		then $f(D') \in C^{\ast}(M)$. 
\item Let $D^{\ast}(M)$ be the unital $C^{\ast}$-algebra generated by all 
	pseudolocal operators on $L^{2}(S)$
	\footnote{$T \in \mathcal{L}(L^{2}(S))$ is pseudolocal if $[f, T] \sim 0$ for all $f \in C_{0}(M)$
	, that is, $[f, T]$ is compact. } 
	with finite propagation. Then 
	$C^{\ast}(M)$ is a closed $\ast$-bisided ideal of $D^{\ast}(M)$. 
\item $fu \sim 0$ and $uf \sim 0$ for all $u \in C^{\ast}(M)$ and $f \in C_{0}(M)$. 
\item Let $\varpi$ be the characteristic function of $M^{+}$. 
	Then $[\varpi , u] \sim 0$ for all $u \in C^{\ast}(M)$.
\end{enumerate}
\end{prp}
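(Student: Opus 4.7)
The plan is to prove the four structural claims in the order stated, since the wave-equation functional calculus of (1) is the main tool used in establishing (3) and (4). Throughout, I will use the fact that $C^{\ast}(M)$ is by definition the norm closure of the $\ast$-algebra of operators with smooth integral kernel and finite propagation, so each assertion can first be checked on such generators and then extended by continuity.

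For (1), the perturbation $D' - D$ is a bounded self-adjoint multiplication operator, hence $D'$ is self-adjoint on the domain of $D$ and carries a continuous functional calculus. By Stone-Weierstrass the Schwartz functions with compactly supported Fourier transform are dense in $C_{0}(\mathbb{R})$, so it suffices to prove $f(D') \in C^{\ast}(M)$ for such $f$. For these,
\[
f(D') = (2\pi)^{-1/2}\int_{\mathbb{R}} \hat{f}(t)\, e^{itD'}\, \der t.
\]
Because $D'$ is a first-order symmetric elliptic operator whose principal symbol has unit norm (the zero-order perturbation leaves the symbol unchanged), finite propagation speed for the wave equation gives $\mathrm{propagation}(e^{itD'}) \le |t|$; integrating against a compactly supported $\hat{f}$ then yields a bounded operator of finite propagation. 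Elliptic regularity of $(D' \pm i)^{-k}$ for large $k$ provides the smoothness of its Schwartz kernel, and norm closure gives $f(D') \in C^{\ast}(M)$.

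For (2), both pseudolocality and finite propagation are preserved under composition: if $T$ is pseudolocal of finite propagation and $u$ is a smooth-kernel, finite-propagation operator, then $Tu$ and $uT$ again have finite propagation, and a direct inspection of their kernels places them in the defining dense subalgebra of $C^{\ast}(M)$. Continuity of multiplication in $\cl{L}(L^{2}(S))$ then lifts this to the full $C^{\ast}$-algebras, giving the two-sided ideal property; $\ast$-closedness and norm-closedness of $C^{\ast}(M)$ are built into its definition. For (3), by density it suffices to treat a generator $u$ with smooth kernel $k(x,y)$ and propagation at most $R$. Approximating $f \in C_{0}(M)$ uniformly by $f_{n} \in C_{c}(M)$, the operator $f_{n}u$ has kernel $f_{n}(x)k(x,y)$, which is smooth with $x$-support in $\mathrm{supp}(f_{n})$ and, by the propagation bound, $y$-support in a fixed relatively compact set (using completeness of $M$). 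Such a kernel is Hilbert-Schmidt, so $f_{n}u$ is compact; passing to the norm limit yields compactness of $fu$. The argument for $uf$ is symmetric.

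The main obstacle is (4), because $\varpi$ is merely bounded measurable and (3) does not apply directly. For a generator $u$ with propagation at most $R$, the kernel of $[\varpi,u]$ equals $(\varpi(x) - \varpi(y))\,k(x,y)$, which vanishes unless $x$ and $y$ lie on opposite sides of $N$; combined with the propagation bound this forces both $x$ and $y$ into the closed $R$-neighborhood $U_{R}$ of $N$. Since $N \cong S^{1}$ is compact and $M$ is complete, $U_{R}$ has compact closure, so I can pick $\chi \in C_{c}(M)$ with $\chi \equiv 1$ on $U_{R}$ and write
\[
[\varpi,u] = \chi[\varpi,u]\chi = (\chi\varpi)(u\chi) - (\chi u)(\varpi\chi).
\]
Part (3) makes $u\chi$ and $\chi u$ compact, and the bounded multiplication operators $\chi\varpi$ and $\varpi\chi$ preserve compactness, so $[\varpi,u]$ is compact on each generator. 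Norm closure of the compact operators in $\cl{L}(L^{2}(S))$ extends the conclusion to every $u \in C^{\ast}(M)$.
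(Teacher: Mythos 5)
The paper itself offers no proof of this proposition: it is quoted from Roe \cite{MR996446} and Higson--Roe \cite{MR1817560}, so your argument can only be measured against the standard proofs in those references. Parts (1), (3) and (4) of your sketch do follow that standard route and are essentially correct: unit propagation speed for $e^{itD'}$ plus density of Schwartz functions with compactly supported Fourier transform gives (1), although the smoothness of the kernel of $f(D')$ is better justified by the mapping property $f(D')\colon H^{-k}\to H^{k}$ for all $k$ (valid for Schwartz $f$) than by ``elliptic regularity of $(D'\pm i)^{-k}$'' --- factoring through resolvent powers destroys finite propagation, so the smoothing has to be read off from $f(D')$ itself; and your localization of the kernel of $[\varpi,u]$ in a compact penumbra of $N$, followed by factorization through part (3), is exactly the classical argument (only note that your auxiliary cutoff is called $\chi$, which collides with the paper's $\chi=2\varpi-1$).

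The genuine gap is in (2). You claim that for $T$ pseudolocal with finite propagation and $u$ a generator with smooth kernel and finite propagation, ``a direct inspection of their kernels places $Tu$ and $uT$ in the defining dense subalgebra.'' That step fails: a pseudolocal operator need not preserve smoothness of kernels. For instance, multiplication by a discontinuous $g\in L^{\infty}(M)$ is pseudolocal with zero propagation, and $gu$ has kernel $g(x)k(x,y)$, which is not smooth, so $gu$ does not belong to the defining $\ast$-algebra even though it does belong to $C^{\ast}(M)$. What is true, and what the standard proofs exploit, is that $Tu$ and $uT$ are locally compact with finite propagation: if $T$ has propagation $\leq R$, $f\in C_{c}(M)$, and $g\in C_{c}(M)$ equals $1$ on the closed $R$-neighborhood of $\mathrm{supp}(f)$, then $fTu=fT(gu)$ is compact because $gu$ is compact by your part (3), and $Tuf$ is compact because $uf$ is. But to conclude membership in $C^{\ast}(M)$ as defined in this paper (norm closure of smooth-kernel, finite-propagation operators) you still need to know that every locally compact finite-propagation operator lies in that closure --- the equivalence of the two descriptions of the Roe algebra, proved by a partition-of-unity and mollification approximation --- or else some other explicit norm approximation of $Tu$ by smooth-kernel operators. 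As written, part (2) is asserted rather than proved; parts (1), (3), (4) are fine modulo the small points above.
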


By using above properties, we define a $K_{1}$-element. 

\begin{prp}
\label{u_phi}
Let $\phi \in C^{1}(M ; GL_{l}(\mathbb{C}))$ be a continuously differentiable map from $M$ 
to general linear group $GL_{l}(\mathbb{C})$. 
We assume that $\| \phi \| < \infty$, $\| grad (\phi ) \| < \infty$ and $\| \phi^{-1} \| < \infty$.  
Define
\[ u_{\phi} := (D + \epsilon )^{-1}
	\begin{bmatrix}
	\phi & 0 \\
	0 & 1 
	\end{bmatrix}
	(D + \epsilon ),
\]
then $u_{\phi} - \begin{bmatrix} 1 & 0 \\ 0 & \phi \end{bmatrix} \in M_{l}(C^{\ast}(M))$. 
\end{prp}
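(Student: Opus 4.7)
The plan is to rewrite $u_{\phi} - \begin{bmatrix}1 & 0 \\ 0 & \phi\end{bmatrix}$ as $(D+\epsilon)^{-1}$ multiplied on the right by a bounded operator of finite propagation, and then to invoke the fact that $C^{\ast}(M)$ is a two-sided $\ast$-ideal of $D^{\ast}(M)$ (part $(2)$ of the preceding proposition).

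First I would check that $D+\epsilon$ is invertible. Because $\epsilon$ is the $\mathbb{Z}_{2}$-grading and $D$ is odd, $\{D,\epsilon\}=0$, so
\[
(D+\epsilon)^{2} \;=\; D^{2}+D\epsilon+\epsilon D+\epsilon^{2} \;=\; D^{2}+1 \;\geq\; 1,
\]
and hence $(D+\epsilon)^{-1}$ exists as a bounded operator equal to $(D+\epsilon)(D^{2}+1)^{-1}$. To show this inverse lies in $C^{\ast}(M)$, I would split
\[
(D+\epsilon)^{-1} \;=\; D(D^{2}+1)^{-1} + \epsilon(D^{2}+1)^{-1}.
\]
The first summand equals $f(D)$ with $f(x)=x/(x^{2}+1)\in C_{0}(\mathbb{R})$, so by part $(1)$ (with $\lambda=0$) it lies in $C^{\ast}(M)$. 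The second summand is $(D^{2}+1)^{-1}=g(D)\in C^{\ast}(M)$ (where $g(x)=1/(x^{2}+1)$) composed with the bounded endomorphism $\epsilon$, which has propagation zero; hence it too lies in $C^{\ast}(M)$.

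Next I would carry out the key algebraic rearrangement. Setting $\Phi := \begin{bmatrix}\phi & 0 \\ 0 & 1\end{bmatrix}$ and $\Phi' := \begin{bmatrix}1 & 0 \\ 0 & \phi\end{bmatrix}$,
\[
u_{\phi} - \Phi' \;=\; (D+\epsilon)^{-1}\bigl[\Phi(D+\epsilon) - (D+\epsilon)\Phi'\bigr] \;=\; (D+\epsilon)^{-1}\bigl[(\Phi D - D\Phi') + (\Phi\epsilon - \epsilon\Phi')\bigr].
\]
Using the odd matrix form $D=\begin{bmatrix}0 & D^{-} \\ D^{+} & 0\end{bmatrix}$ and $[D,\phi]=c(\mathrm{grad}(\phi))$, a direct computation gives
\[
\Phi D - D\Phi' \;=\; \begin{bmatrix}0 & [\phi,D^{-}] \\ 0 & 0\end{bmatrix},
\]
which is bounded thanks to $\|\mathrm{grad}(\phi)\|<\infty$ and of propagation zero. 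A parallel calculation yields $\Phi\epsilon - \epsilon\Phi' = (\phi-1)\otimes I_{2}$, bounded because $\|\phi\|<\infty$, and again of propagation zero.

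Each correction summand is therefore pseudolocal with finite propagation, so each belongs to $M_{l}(D^{\ast}(M))$. Since $C^{\ast}(M)$ is a two-sided $\ast$-ideal of $D^{\ast}(M)$ and $(D+\epsilon)^{-1}\in C^{\ast}(M)$, left-multiplication keeps the whole expression inside $M_{l}(C^{\ast}(M))$. The main obstacle is the algebraic step: one has to see that the diagonal contribution $\Phi\epsilon-\epsilon\Phi'$ collapses to the \emph{bounded} operator $(\phi-1)\otimes I_{2}$ rather than producing an unbounded term—this is precisely the reason the particular constant $\mathrm{diag}(1,\phi)$ (and not $\mathrm{diag}(\phi,1)$) is the correct element to subtract from $u_{\phi}$.
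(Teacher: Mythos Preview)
Your argument is correct and follows essentially the same route as the paper: both show $(D+\epsilon)^{-1}\in C^{\ast}(M)$ via $(D+\epsilon)^{2}=D^{2}+1$ and then arrive at the identical formula
\[
u_{\phi}-\begin{bmatrix}1&0\\0&\phi\end{bmatrix}=(D+\epsilon)^{-1}\begin{bmatrix}\phi-1 & -c(\mathrm{grad}(\phi))^{-}\\ 0 & \phi-1\end{bmatrix}.
\]
The only cosmetic difference is that the paper carries out the computation on $\sigma\in C_{c}^{\infty}(S)$ and records the explicit $L^{2}$ bound $\|u_{\phi}\sigma\|\leq 3(\|\phi\|+\|\mathrm{grad}(\phi)\|+1)\|\sigma\|$ before extending, whereas you invoke the ideal property of $C^{\ast}(M)$ in $D^{\ast}(M)$ to absorb the zero-propagation correction term directly.
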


\begin{proof}

It suffices to show the $l=1$ case. 
Firstly, $(D + \epsilon )^{-1} \in C^{\ast}(M)$ 
and $\| (D + \epsilon)^{-1} \| \leq 2$
since 
$(D + \epsilon )^{-1} = (D^{2} + 1)^{-1}(D + \epsilon ) \in C^{\ast}(M)$. 
On the other hand,  
\begin{align*} 
u_{\phi}\sigma
&= (D + \epsilon )^{-1}
	\begin{bmatrix} \phi & 0 \\ 0 & 1 \end{bmatrix}
	\begin{bmatrix} 1 & D^{-} \\ D^{+} & -1 \end{bmatrix}\sigma \\
&= (D + \epsilon )^{-1}
	\begin{bmatrix} \phi & D^{-}\phi - D^{-}\phi + \phi D^{-} \\ D^{+} & -1 \end{bmatrix} \sigma \\
&= (D + \epsilon )^{-1}\left( (D + \epsilon ) \begin{bmatrix} 1 & 0 \\ 0 & \phi \end{bmatrix}
	+ \begin{bmatrix} \phi - 1 & [\phi , D^{-}] \\ 0 & \phi - 1 \end{bmatrix} \right) \sigma \\
&= \begin{bmatrix} 1 & 0 \\ 0 & \phi \end{bmatrix}\sigma + 
	(D + \epsilon )^{-1}\begin{bmatrix} \phi - 1 & -c(grad (\phi ))^{-} \\ 0 & \phi - 1 \end{bmatrix}\sigma ,
\end{align*}
for any $\sigma \in C^{\infty}_{c}(S)$, 
where $c(grad (\phi))^{-}$ is the negative part of the Clifford action of $grad (\phi)$. 
So 
\[
\| u_{\phi}\sigma \|_{L^{2}} \leq 3 ( \| \phi \| + \| grad (\phi ) \| + 1 )\| \sigma \|_{L^{2}}. 
\]
Thus $u_{\phi}$ can be extended uniquely as a bounded operator on $L^{2}(S)$ 
since $C^{\infty}_{c}(S)$ is dense in $L^{2}(S)$, 
and $u_{\phi} - \begin{bmatrix} 1 & 0 \\ 0 & \phi \end{bmatrix} \in C^{\ast}(M)$. 
\end{proof}

Let $C_{b}(M)$ be the set of all bounded continuous functions on $M$. 
We assume that $C^{\ast}_{b}(M) := C^{\ast}(M) + C_{b}(M)$ is a unital $C^{\ast}$-subalgebra 
of $\mathcal{L}(L^{2}(S))$. 
By this proposition, $u_{\phi}$ is invertible in $M_{l}(C^{\ast}_{b}(M))$ 
with $(u_{\phi})^{-1} = u_{\phi^{-1}}$. 
So we can consider 
$[u_{\phi}] \in K_{1}(C^{\ast}_{b}(M))$.

\subsection{Cyclic cocycle and pairing}

Let $\varpi$ be the characteristic function of 
$M^{+}$, 
and $\chi := 2\varpi -1$. 
We note that $[\chi , u]$ is a compact operator for all $u \in C^{\ast}_{b}(M)$ 
since $[\chi , f] = 0$ for all $f \in C_{b}(M)$.  
We define the Banach algebra 
\[ \mathscr{A}_{b} := \{ A \in C^{\ast}_{b}(M)  \,;\, [\chi , A] \text{ is a trace class operator} \} \]
with norm $\| A \|_{\mathscr{A}_{b}} := \| A \| + \| [\chi , A] \|_{1}$, where 
$\| \cdot \|$ is a operator norm on $L^{2}(S)$ and $\| \cdot \|_{1}$ is a trace norm. 
We define a cyclic cocycle on $\mathscr{A}_{b}$ and take the pairing of it with $K_{1}(C^{\ast}_{b}(M))$.

\begin{prp}
\cite[Proposition 1.6]{MR996446} 
For any $A,B \in \mathscr{A}_{b}$, we define
\[ \zeta (A,B) := \frac{1}{4}\mathrm{Tr}(\chi [\chi , A][\chi , B]). \] 
Then $\zeta$ is a cyclic one-cocycle on $\mathscr{A}_{b}$. 
We call that $\zeta$ is the Roe cocycle. 
\end{prp}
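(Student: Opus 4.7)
The plan is to verify the three properties that make $\zeta$ a cyclic one-cocycle on $\mathscr{A}_{b}$: well-definedness of the trace, antisymmetry (the cyclicity condition in degree one), and the Hochschild cocycle identity.

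First I would check well-definedness. For $A,B \in \mathscr{A}_{b}$, both $[\chi, A]$ and $[\chi, B]$ are trace class by the definition of $\mathscr{A}_{b}$, so their product lies in the trace ideal (in fact in the ideal of operators with summable singular values squared). Left multiplication by the bounded operator $\chi$ preserves this, hence $\chi[\chi, A][\chi, B]$ is trace class and $\zeta(A,B) \in \mathbb{C}$ is well defined; bilinearity is immediate.

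The algebraic input that drives the remaining two verifications is the identity $\chi^{2}=1$, which gives $\chi[\chi, X] = -[\chi, X]\chi$ for every bounded operator $X$; i.e.\ $\chi$ anticommutes with the image of the derivation $d := [\chi,\,\cdot\,]$. Combined with the cyclicity of the trace, this anticommutation yields antisymmetry at once: one computes
\[
\mathrm{Tr}(\chi[\chi, B][\chi, A]) \;=\; \mathrm{Tr}([\chi, A]\chi[\chi, B]) \;=\; -\mathrm{Tr}(\chi[\chi, A][\chi, B]),
\]
so $\zeta(B,A) = -\zeta(A,B)$.

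For the Hochschild identity $\zeta(AB,C) - \zeta(A,BC) + \zeta(CA,B) = 0$, I would expand each commutator by the Leibniz rule $[\chi, XY] = [\chi, X]Y + X[\chi, Y]$, producing six trace terms. Two of them (containing the factor $B$ outside any commutator) cancel directly, and the remaining four are rearranged using trace cyclicity together with the identity $\chi X = X\chi + [\chi, X]$ applied to $X=A$ or $X=C$. The commutator corrections produced in these rearrangements cancel in pairs and what remains vanishes, yielding the cocycle identity.

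The main obstacle I anticipate is purely the sign bookkeeping in the Hochschild step; it is not conceptually deep, but one must keep careful track of where $\chi$ sits because $\chi$ anticommutes with $\mathrm{Im}(d)$ while commuting with every element of $C_{b}(M) \subset C^{\ast}_{b}(M)$. A cleaner, a priori route is to recognize $(L^{2}(S), \chi)$ as an odd Fredholm module over $\mathscr{A}_{b}$ in the sense of Connes and $\zeta$ as (a multiple of) its Chern character; the cocycle property then follows from the standard formalism, as already exploited in \cite{MR996446}.
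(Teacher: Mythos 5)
The paper itself offers no proof of this proposition: it is quoted from Roe \cite[Proposition 1.6]{MR996446}, so there is no internal argument to match your attempt against; a direct verification like yours is a legitimate substitute, and your closing remark that $\zeta$ is (up to a constant) the Chern character of the odd Fredholm module $(L^{2}(S),\chi)$ is exactly the viewpoint behind Roe's statement. Your well-definedness and antisymmetry steps are fine: $[\chi,A]$ is trace class and $[\chi,B]$ is bounded, so $\chi[\chi,A][\chi,B]$ is trace class (the parenthetical about square-summable singular values is unnecessary and weaker than what you use), and the anticommutation $\chi[\chi,X]=-[\chi,X]\chi$ coming from $\chi^{2}=1$, combined with trace cyclicity, gives $\zeta(B,A)=-\zeta(A,B)$.

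The Hochschild step, however, is not correct as described, and the one genuinely needed idea is missing there. After the Leibniz expansion and the direct cancellation of the two terms containing $B$ outside a commutator, the remaining four terms, rearranged exactly as you propose (trace cyclicity plus $\chi X=X\chi+[\chi,X]$ for $X=A$ and $X=C$), do \emph{not} produce corrections that cancel in pairs: the main terms cancel, and the corrections add up to $\mathrm{Tr}([\chi,A][\chi,B][\chi,C])+\mathrm{Tr}([\chi,C][\chi,A][\chi,B])=2\,\mathrm{Tr}([\chi,A][\chi,B][\chi,C])$. This quantity does vanish, but for a different reason, which you must state: a product of three operators each anticommuting with $\chi$ again anticommutes with $\chi$, and any trace-class $P$ with $\chi P\chi=-P$ satisfies $\mathrm{Tr}(P)=\mathrm{Tr}(\chi P\chi)=-\mathrm{Tr}(P)=0$. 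This parity lemma is the same mechanism you already used for antisymmetry, so it is a one-line addition, but without it the bookkeeping as you describe it does not close; with it, the cocycle identity and hence the proposition follow.
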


To take Connes' pairing of the Roe cocycle $\zeta$ with $K_{1}(C^{\ast}_{b}(M))$, 
we use a next fact. 

\begin{prp}
\cite[p.92]{MR823176} 
$\mathscr{A}_{b}$ is dense and closed under holomorphic functional calculus in $C^{\ast}_{b}(M)$.
So the inclusion $i : \mathscr{A}_{b} \to C^{\ast}_{b}(M)$ induces the
isomorphism $i_{\ast} : K_{1}(\mathscr{A}_{b}) \cong K_{1}(C^{\ast}_{b}(M))$. 
\end{prp}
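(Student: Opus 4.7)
The plan is to verify the two ingredients separately---density of $\mathscr{A}_{b}$ in $C^{\ast}_{b}(M)$ and stability under holomorphic functional calculus---and then invoke the standard $K$-theoretic density theorem. For density I would exhibit an explicit dense $\ast$-subalgebra already contained in $\mathscr{A}_{b}$. By definition $C^{\ast}(M)$ is the completion of the $\ast$-algebra $\mathbb{C}[M]$ of bounded integral operators with smooth Schwartz kernel and finite propagation, so for $A \in \mathbb{C}[M]$ with kernel $k_{A}$ and propagation $R$ the commutator $[\chi,A]$ has kernel $(\chi(x)-\chi(y))k_{A}(x,y)$, which vanishes unless $x$ and $y$ lie on opposite sides of $N$ at distance at most $R$. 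Because $N \cong S^{1}$ is compact and $M$ is complete, this support sits inside a compact subset of $M \times M$, and a smooth kernel on a compact set defines a trace class operator. Combined with $[\chi,f] = 0$ for every $f \in C_{b}(M)$, this shows $\mathbb{C}[M] + C_{b}(M) \subset \mathscr{A}_{b}$, and the former is already norm-dense in $C^{\ast}_{b}(M)$.

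For the holomorphic calculus part, I would first check that $(\mathscr{A}_{b}, \|\cdot\|_{\mathscr{A}_{b}})$ is a Banach algebra. Completeness follows from completeness of $\mathcal{L}(L^{2}(S))$ and of the trace ideal, and submultiplicativity from the Leibniz rule $[\chi,AB] = [\chi,A]B + A[\chi,B]$ together with the ideal inequality $\|XY\|_{1} \leq \|X\|\,\|Y\|_{1}$. The key remaining point is \emph{inverse closedness}: if $A \in \mathscr{A}_{b}$ is invertible in $C^{\ast}_{b}(M)$, then
\[
[\chi, A^{-1}] = -A^{-1}[\chi, A]A^{-1}
\]
is a product of two bounded operators with a trace class operator, hence trace class, so $A^{-1} \in \mathscr{A}_{b}$. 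The same identity applied to $(z-A)^{-1}$ gives a locally uniform $\mathscr{A}_{b}$-norm bound on the resolvent as $z$ varies on a compact subset of the resolvent set, so the Dunford integral $f(A) = \frac{1}{2\pi i}\oint f(z)(z-A)^{-1}\,dz$ converges in $\mathscr{A}_{b}$ whenever $f$ is holomorphic on a neighborhood of the spectrum. The same argument extends verbatim to every matrix algebra $M_{n}(\mathscr{A}_{b})$ with $\chi$ replaced by $\chi \otimes \mathrm{id}$.

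With density and holomorphic closure established in all matrix sizes, the isomorphism $i_{\ast}: K_{1}(\mathscr{A}_{b}) \to K_{1}(C^{\ast}_{b}(M))$ follows from the general density theorem in \cite{MR823176}: any dense subalgebra of a Banach algebra that is closed under holomorphic functional calculus in every matrix algebra induces an isomorphism on $K$-theory. The main obstacle I anticipate is the trace class claim in the density step---one must genuinely have a smooth kernel supported in a relatively compact region of $M \times M$, and this is exactly where compactness of $N \cong S^{1}$ is indispensable; if $N$ were non-compact the same construction would only yield a Hilbert--Schmidt commutator and the algebra $\mathscr{A}_{b}$ would have to be enlarged with a decay condition along $N$ for the argument to survive.
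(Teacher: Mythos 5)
Your proposal is correct and follows essentially the same route as the paper: the paper also proves density by exhibiting the subalgebra $\mathscr{X}_{b} = \mathscr{X} + C_{b}(M)$ of smooth-kernel, finite-propagation integral operators plus bounded continuous functions inside $\mathscr{A}_{b}$ (citing Roe's Proposition 1.6 for the trace-class commutator), and it defers inverse-closedness, the holomorphic functional calculus, and the resulting isomorphism on $K_{1}$ to Connes, which you instead write out via the standard identity $[\chi,A^{-1}] = -A^{-1}[\chi,A]A^{-1}$ and the Dunford integral. The one step that needs repair is your justification of the trace-class claim: the kernel $(\chi(x)-\chi(y))k_{A}(x,y)$ is supported in a compact region but is \emph{not} smooth (the factor $\chi(x)-\chi(y)$ jumps across $N$), so you cannot directly invoke the statement that a smooth kernel on a compact set gives a trace class operator; a merely bounded compactly supported kernel only gives Hilbert--Schmidt, as you yourself note at the end. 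The standard fix is to write $[\chi,A] = \chi A(1-\chi) - (1-\chi)A\chi$ and insert a compactly supported smooth cutoff $\varphi$ equal to $1$ on the $R$-neighborhood of $N$, so that for instance $\chi A(1-\chi) = \chi\,\varphi A \varphi\,(1-\chi)$; here $\varphi A \varphi$ does have a smooth compactly supported kernel, hence is trace class, and multiplication by the bounded operators $\chi$ and $1-\chi$ preserves the trace class. With that adjustment your inclusion $\mathbb{C}[M] + C_{b}(M) \subset \mathscr{A}_{b}$ is exactly the content of the Roe result the paper cites, and the remainder of your argument coincides with what the paper quotes from Connes.
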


\begin{proof}
Let $\mathscr{X}$ be the $\ast$-algebra of all integral operators 
on $L^{2}(S)$ with a smooth kernel and  finite propagation speed. 
Then, $\mathscr{X}_{b} := \mathscr{X} + C_{b}(M)$ is a dense subalgebra in $C^{\ast}_{b}(M)$ 
and $\mathscr{X}_{b} \subset \mathscr{A}_{b}$ 
\cite[Proposition 1.6]{MR996446}. 
So $\mathscr{A}_{b}$ is dense in $C^{\ast}_{b}(M)$. 

The rest of proof is in 
\cite[p.92]{MR823176}. 
\end{proof}

Using this proposition, 
we can take the pairing of the Roe cocycle with $K_{1}(C^{\ast}_{b}(M))$ through the isomorphism 
$i_{\ast} : K_{1}(\mathscr{A}_{b}) \cong K_{1}(C^{\ast}_{b}(M))$ as follows: 

\begin{dfn}
\cite[p.109]{MR823176} 
Define the map 
\[ \langle \cdot , \zeta \rangle : K_{1}(C^{\ast}_{b}(M)) \to \mathbb{C} \]
by $\langle [u] , \zeta \rangle := \frac{1}{8\pi i}\sum_{i,j}\zeta ((u^{-1})_{ji}, u_{ij})$, 
where we assume $[u]$ is represented by a element of $GL_{n}(\mathscr{A}_{b})$
and $u_{ij}$ is the $(i,j)$-component of $u$. 
We note that this is Connes' pairing of cyclic cohomology with $K$-theory, and
$\frac{1}{8\pi i}$ is a constant of the pairing. 
\end{dfn}

The goal of this paper is to prove that 
the result of this pairing with $[u_{\phi}]$ is the Fredholm index of a Toeplitz operator. 

\subsection{Toeplitz operators}

We review Toeplitz operators on $S^{1}$ to fix notations. 

\begin{prp}
\cite{MR0102720} 
\label{Toep}
Let $\phi \in C(S^{1} \,;\, GL_{l}(\mathbb{C}))$ be a continuous map 
from $S^{1}$ to $GL_{l}(\mathbb{C})$. 
Define $\mathcal{H} := \mathrm{Span}_{\mathbb{C}}\{ e^{ikx} \,;\, k = 0,1,2, \dots \} \subset L^{2}(S^{1})$
\footnote{$\mathcal{H}$ is called the Hardy space. }  
and let $P : L^{2}(S^{1}) \to \mathcal{H}$ be the projection. 
Then for any $f \in \mathcal{H}^{l}$, we define Toeplitz operator 
$T_{\phi} : \mathcal{H}^{l} \to \mathcal{H}^{l}$
by $T_{\phi}f := P\phi f$. Then $T_{\phi}$ is a Fredholm operator
and $\ind (T_{\phi}) = - \mathrm{deg}(\det (\phi))$, 
where $\mathrm{deg}(\det (\phi ))$ is the degree of the map $\det (\phi ) : S^{1} \to \mathbb{C}^{\times}$. 
\end{prp}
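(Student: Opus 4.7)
The plan is to prove the classical Gohberg-Krein index formula in three steps: (i) establish Fredholmness of $T_\phi$ by producing a parametrix, (ii) show the index depends only on the homotopy class of $\phi$, and (iii) normalize on a single generator of $\pi_1(GL_l(\mathbb{C}))$ to fix the universal constant.

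For (i), I would write $M_\phi$ for multiplication by $\phi$ on $L^2(S^1)^l$ and exploit the identity
\[
T_\phi T_\psi - T_{\phi\psi} = -\,P M_\phi (I-P) M_\psi P.
\]
The crucial lemma is that the Hankel-type operator $(I-P) M_\phi P$ is compact whenever $\phi$ is continuous. This follows by Stone--Weierstrass: trigonometric polynomials are uniformly dense in $C(S^1)$, and for each monomial $e^{ikx}$ the operator $(I-P) M_{e^{ikx}} P$ has finite rank (it is nonzero on only finitely many basis vectors $e^{ijx}$), so the general case is a norm-limit of finite-rank operators. Applied with $\psi = \phi^{-1}$, this yields that $T_\phi T_{\phi^{-1}} - I$ and $T_{\phi^{-1}} T_\phi - I$ are compact, so $T_\phi$ is Fredholm with parametrix $T_{\phi^{-1}}$. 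The same identity gives the multiplicativity $\ind(T_{\phi\psi}) = \ind(T_\phi) + \ind(T_\psi)$ modulo compacts.

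For (ii), the assignment $\phi \mapsto T_\phi$ is norm-continuous from $C(S^1; GL_l(\mathbb{C}))$ into the Fredholm operators on $\mathcal{H}^l$, and the Fredholm index is locally constant; hence $\ind(T_\phi)$ descends to $\pi_0(C(S^1; GL_l(\mathbb{C}))) \cong \pi_1(GL_l(\mathbb{C})) \cong \mathbb{Z}$, the last isomorphism being given by $[\phi] \mapsto \deg(\det\phi)$ (via the polar decomposition in $GL_l$ together with $\pi_1(U(l)) \cong \mathbb{Z}$ under the determinant). Combined with the multiplicativity above, this forces $\ind(T_\phi) = c \cdot \deg(\det\phi)$ for a universal integer $c$. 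Step (iii) pins down $c$ by an explicit computation on $\phi(x) = e^{-ix}$ with $l=1$: then $T_\phi(e^{ikx}) = e^{i(k-1)x}$ for $k \geq 1$ and $T_\phi(1) = 0$, so $T_\phi$ is the adjoint of the unilateral shift with $\dim \ker T_\phi = 1$ and trivial cokernel, giving index $+1$ against $\deg(e^{-ix}) = -1$, hence $c = -1$. The only genuinely technical point is the Hankel compactness lemma underlying (i); everything else is a clean homotopy argument plus this one explicit calculation.
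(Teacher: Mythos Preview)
Your argument is correct and is essentially the standard proof of the Gohberg--Krein index formula. Note, however, that the paper does not give its own proof of this proposition: it is stated as a classical result with a citation to \cite{MR0102720} and used as a black box, so there is no ``paper's proof'' to compare against. Your write-up would serve perfectly well as a self-contained justification; the only cosmetic remark is that the multiplicativity in step~(i) is not strictly needed once step~(ii) identifies the index as a function on $\pi_1(GL_l(\mathbb{C}))\cong\mathbb{Z}$, since every class is already represented by $\mathrm{diag}(e^{ikx},1,\dots,1)$ and the single computation in~(iii) together with homotopy invariance then handles all $k$ directly.
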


We note that the Hardy space $\mathcal{H}$ is a positive eigenspace of $-i\partial / \partial x$, 
which is a Dirac operator on $S^{1}$. See also 
\cite[p.160]{MR669904}. 

\subsection{Main theorem}

Using above notation, 
we state our main theorem as follows. 

\begin{thm}
\label{mainthm}
We also denote by $\phi$ the restriction on $N$ of $\phi$. 
Then the following formula holds: 
\[ \left\langle [u_{\phi}]
, \zeta \right\rangle 
	= -\frac{1}{8\pi i}\ind (T_{\phi}). \]
\end{thm}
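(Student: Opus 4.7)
The plan is to follow the two-step strategy outlined in the introduction: first reduce the computation from the general partitioned surface $M$ to the model cylinder $M_{0} = \mathbb{R} \times S^{1}$ by a Higson-type locality argument, and then compute the pairing explicitly on this model by exploiting the Fourier basis $\{e^{ikx}\}$ of $L^{2}(S^{1})$.

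I would begin by verifying that $u_{\phi}$ represents a class in $K_{1}(\mathscr{A}_{b})$ rather than only in $K_{1}(C^{\ast}_{b}(M))$, so that the pairing $\langle [u_{\phi}], \zeta \rangle$ is defined through the isomorphism $i_{\ast}$. From the identity
\[
u_{\phi} - \begin{bmatrix} 1 & 0 \\ 0 & \phi \end{bmatrix} = (D + \epsilon)^{-1}\begin{bmatrix} \phi - 1 & -c(grad (\phi))^{-} \\ 0 & \phi - 1 \end{bmatrix}
\]
obtained in the proof of Proposition \ref{u_phi}, together with the analogous identity for $u_{\phi^{-1}} = u_{\phi}^{-1}$, one sees that $[\chi, u_{\phi}] = [\chi, u_{\phi} - \mathrm{diag}(1,\phi)]$, because $\mathrm{diag}(1,\phi)$ is a multiplication operator and hence commutes with $\chi$. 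Standard resolvent and smoothing estimates for $(D+\epsilon)^{-1}$ near the partition $N$ then show that this commutator is trace class.

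Next I would establish the locality of the pairing in $\phi$: if $\phi_{0}$ and $\phi_{1}$ agree on a neighborhood of $N$, then $\langle [u_{\phi_{0}}], \zeta \rangle = \langle [u_{\phi_{1}}], \zeta \rangle$. The key algebraic input is the multiplicativity $u_{\phi_{0}}u_{\phi_{1}}^{-1} = u_{\phi_{0}\phi_{1}^{-1}}$; when $\phi_{0}\phi_{1}^{-1}$ is identically $I$ near $N$, the smoothed support of $[\chi, u_{\phi_{0}\phi_{1}^{-1}}]$ is disjoint from $N$, and the Roe cocycle vanishes on such elements by the standard argument of Higson \cite{MR1113688}. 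A parallel cut-and-paste argument for the geometric data reduces the general situation to $M_{0} = \mathbb{R} \times S^{1}$ with the flat product metric, partition $(\mathbb{R}_{\geq 0} \times S^{1}, \mathbb{R}_{\leq 0} \times S^{1}, \{0\}\times S^{1})$, the product Dirac operator, and $\phi$ taken independent of $t$.

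On $M_{0}$, partial Fourier transform in $x$ decomposes $D$ into a direct sum of one-dimensional massive Dirac operators $D_{k}$ on $\mathbb{R}$ indexed by $k\in\mathbb{Z}$, with elementary resolvents $(D_{k}+\epsilon)^{-1}$. The involution $\chi$ depends only on $t$ and so respects the $k$-decomposition, while $\phi$ shifts Fourier modes via its coefficients $\hat\phi(k)$. Consequently $[\chi, u_{\phi}]$ becomes a matrix in the basis $\{e^{ikx}\}$ whose entries are one-dimensional operators on $L^{2}(\mathbb{R})$, and $\frac{1}{4}\mathrm{Tr}(\chi[\chi, u_{\phi^{-1}}][\chi, u_{\phi}])$ expands as a double sum over $(j,k)$ whose one-dimensional trace factors are computable in closed form. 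Direct rearrangement identifies this sum, up to the normalization $\frac{1}{8\pi i}$ and the sum over matrix components, with the winding number of $\det\phi : S^{1} \to \mathbb{C}^{\times}$; combining with the identity $\ind (T_{\phi}) = -\mathrm{deg}(\det (\phi))$ from Proposition \ref{Toep} yields $\langle [u_{\phi}], \zeta\rangle = -\frac{1}{8\pi i}\ind (T_{\phi})$.

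The main obstacle is the explicit trace computation on the cylinder: one must coordinate the grading $\epsilon$, the off-diagonal placement of $\phi$ inside the definition of $u_{\phi}$, and the infinite Fourier sum so that the contributions collapse to $\deg(\det\phi)$ with the correct sign and normalization. The locality reduction is a fairly direct adaptation of \cite{MR1113688}, while the model computation is the genuinely new content, relying on the special role of $\{e^{ikx}\}$ as simultaneous eigenfunctions of $-i\partial/\partial x$ on $S^{1}$ and as representatives of $\pi_{1}(S^{1})$.
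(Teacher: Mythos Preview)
Your global architecture matches the paper's: reduce to the cylinder by a Higson-style locality argument, then compute on $\mathbb{R}\times S^{1}$ using the Fourier basis $\{e^{ikx}\}$. The reduction step you sketch is essentially the paper's Lemmas~\ref{cobor}, \ref{Higson} and Corollary~\ref{ourH}.

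Where you diverge is in the model computation. The paper never attempts the direct trace computation you propose. Instead it immediately converts the cyclic pairing into a Fredholm index via $\zeta(u^{-1},u) = -\ind(\varpi u\varpi)$ (Section~\ref{pairing}), and from that point works only with homotopy invariance of indices. Two homotopies are used: first a deformation $D_{s}$ of the Dirac operator (Subsection~\ref{homotopy}) which strips off the grading term $\epsilon$ and reduces $\varpi u_{\phi}\varpi$ to a single scalar Wiener--Hopf-type operator $\mathscr{T}_{\phi}$; second, a homotopy in $C^{1}(S^{1};GL_{l}(\mathbb{C}))$ from the general $\phi$ to the special $\phi_{k}(x)=e^{ikx}$. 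After these reductions the actual computation (Lemma~\ref{WHind}, Proposition~\ref{phik}) is a one-line index count on the basis $\rho_{n}(t)=(t-i)^{n}/(t+i)^{n+1}$, done only for $\phi_{k}$.

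Your route---expanding $\frac{1}{4}\mathrm{Tr}(\chi[\chi,u_{\phi^{-1}}][\chi,u_{\phi}])$ as a double Fourier sum and rearranging to the winding number---is in principle viable, but you have correctly flagged it as the main obstacle and have not carried it out. In practice it is considerably harder than the paper's approach: you would need explicit trace-class control of each $(j,k)$ block, a closed formula for the one-dimensional traces involving $(D_{k}+\epsilon)^{-1}$, and a summation argument that extracts $\deg(\det\phi)$ for \emph{arbitrary} $C^{1}$ symbols rather than just monomials. The paper's Fredholm-index reformulation buys you homotopy invariance for free, which collapses the whole computation to the trivial case $\phi=\phi_{k}$; your direct trace approach forfeits that leverage. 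If you want to complete your version, the cleanest path is probably to first prove the formula for $\phi_{k}$ by your method (where the double sum is diagonal and manageable) and then invoke homotopy invariance of the pairing in $\phi$---but at that point you are essentially reproducing the paper's argument through a less economical door.
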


By the index theorem of Toeplitz operators (Proposition \ref{Toep}), 
right hand side of this formula is calculated by some geometric invariant of the mapping degree. 
So this theorem is a kind of index theorem (see also next section): 

\begin{cor}
Using above notation, 
\[
\ind (\varpi u_{\phi} \varpi : \varpi (L^{2}(S))^{l} \to \varpi (L^{2}(S))^{l}) = - \mathrm{deg}(\det (\phi)) . 
\]
\end{cor}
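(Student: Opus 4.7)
The plan is to deduce the corollary by combining Theorem~\ref{mainthm} and Proposition~\ref{Toep} with a general Fedosov-type identification of Connes' pairing $\langle [u_\phi], \zeta \rangle$ with the Fredholm index of the compression $\varpi u_\phi \varpi$.

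First, I would show that $\varpi u_\phi \varpi$, viewed as an operator on $\varpi(L^2(S))^l$, is Fredholm. Since $u_\phi$ is invertible in $M_l(C^{\ast}_b(M))$ with inverse $u_{\phi^{-1}}$ (Proposition~\ref{u_phi}), and since both operators lie in the smooth subalgebra $M_l(\mathscr{A}_b)$, the commutators $[\chi, u_\phi]$ and $[\chi, u_{\phi^{-1}}]$ are trace class. A direct parametrix computation then shows that $\varpi u_{\phi^{-1}}\varpi$ is a two-sided inverse of $\varpi u_\phi \varpi$ modulo trace-class (in particular compact) operators, establishing the Fredholm property.

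Next, I would establish the general identity
\[
\langle [u_\phi], \zeta \rangle \;=\; -\frac{1}{8\pi i}\,\ind(\varpi u_\phi \varpi).
\]
This is a Fedosov-type computation: substituting $\chi = 2\varpi - 1$ into $\zeta(u_{\phi^{-1}}, u_\phi) = \tfrac{1}{4}\mathrm{Tr}(\chi[\chi, u_{\phi^{-1}}][\chi, u_\phi])$, expanding the commutators in terms of $\varpi$, applying cyclicity of the trace on trace-class pieces, and invoking the standard formula relating the trace of $\varpi - u_\phi^{-1}\varpi u_\phi$ (or its off-diagonal variant) to the index of the compression.

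Finally, combining this identification with Theorem~\ref{mainthm} yields $\ind(\varpi u_\phi \varpi) = \ind(T_\phi)$, and then Proposition~\ref{Toep} converts this into $-\deg(\det\phi)$, completing the corollary.

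The main technical obstacle will be Step 2: keeping all the constants straight in the Fedosov-type identification and verifying that each term produced by expanding $\chi = 2\varpi - 1$ is in the correct Schatten class so that cyclicity of the trace can be applied legitimately. Once that algebraic identity is in hand, Steps 1 and 3 follow directly from the tools already developed in the paper.
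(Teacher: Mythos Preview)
Your approach is essentially the same as the paper's: the Fedosov-type identity $\zeta(u^{-1},u) = -\ind(\varpi u\varpi)$ is exactly the Proposition proved in Section~\ref{pairing}, and combining it with Theorem~\ref{mainthm} and Proposition~\ref{Toep} gives the corollary immediately. One small caveat: you assert that $u_{\phi}$ itself lies in $M_{l}(\mathscr{A}_{b})$ (i.e.\ that $[\chi,u_{\phi}]$ is trace class), but the paper never verifies this and instead passes through the isomorphism $K_{1}(\mathscr{A}_{b})\cong K_{1}(C^{\ast}_{b}(M))$ to a representative in $GL_{n}(\mathscr{A}_{b})$ and then invokes homotopy invariance of the Fredholm index to return to $\varpi u_{\phi}\varpi$; your Step~1 only needs compactness of $[\varpi,u_{\phi}]$, which does follow from the stated properties of $C^{\ast}(M)$, so this adjustment is minor.
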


\section{The pairing and the Fredholm index}
\label{pairing}

To prove Theorem \ref{mainthm}, we firstly describe 
$\zeta (u^{-1}, u)= \sum_{i,j}\zeta ((u^{-1})_{ji}, u_{ij})$ by the 
Fredholm index of some Fredholm operator. 

\begin{prp}
\cite[IV.1]{MR1303779} 
For any $u \in GL_{n}(\mathscr{A}_{b})$, 
\[ \zeta (u^{-1},u) = -\ind (\varpi u \varpi : \varpi (L^{2}(S))^{n} \to \varpi (L^{2}(S))^{n}). \]
\end{prp}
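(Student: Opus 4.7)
The plan is to reduce the proof to the standard Fedosov-style formula for the Fredholm index by performing a block decomposition relative to the projection $\varpi$. Throughout, let $H := (L^{2}(S))^{n}$ and view $\varpi$ as the projection onto $\varpi H$. I would first rewrite the Roe cocycle in terms of $\varpi$ rather than $\chi$: since $\chi = 2\varpi - 1$ and $\chi^{2} = 1$, one has $[\chi , A] = 2[\varpi , A]$, and hence
\[
\zeta (u^{-1}, u) = \tfrac{1}{4}\mathrm{Tr}\bigl(\chi [\chi , u^{-1}][\chi , u]\bigr) = \mathrm{Tr}\bigl(\chi [\varpi , u^{-1}][\varpi , u]\bigr).
\]
This is the form best suited to a block-matrix computation.

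Next, I would write $u$ and $u^{-1}$ as $2\times 2$ block operators with respect to $H = \varpi H \oplus (1-\varpi) H$:
\[
u = \begin{bmatrix} a & b \\ c & d \end{bmatrix}, \qquad u^{-1} = \begin{bmatrix} a' & b' \\ c' & d' \end{bmatrix},
\]
so that $a = \varpi u \varpi$, $a' = \varpi u^{-1} \varpi$, and so on. A direct computation shows $[\varpi , u]$ is purely off-diagonal with entries $b$ and $-c$, and similarly for $[\varpi , u^{-1}]$. Multiplying them together and left-multiplying by $\chi$ yields a diagonal block operator whose trace is
\[
\mathrm{Tr}\bigl(\chi [\varpi , u^{-1}][\varpi , u]\bigr) = -\mathrm{Tr}(b'c) + \mathrm{Tr}(bc').
\]
Because $u \in GL_{n}(\mathscr{A}_{b})$, every off-diagonal block $b,c,b',c'$ is trace class, so each term above is individually well-defined.

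Now I would exploit the identities $u u^{-1} = u^{-1}u = 1$ read off blockwise: these give $aa' = \varpi - bc'$ and $a'a = \varpi - b'c$. Consequently, as an operator on $\varpi H$, the block $a = \varpi u \varpi$ is invertible modulo trace class with parametrix $a'$, hence Fredholm, and by the Fedosov formula
\[
\ind (\varpi u \varpi ) = \mathrm{Tr}(\varpi - a'a) - \mathrm{Tr}(\varpi - aa') = \mathrm{Tr}(b'c) - \mathrm{Tr}(bc').
\]
Combining with the previous identity gives $\zeta (u^{-1},u) = -\ind (\varpi u \varpi )$, as required.

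The main subtlety is not algebraic but analytic: one must verify throughout that cyclic permutations under $\mathrm{Tr}$ are legal, i.e.\ that every product being manipulated genuinely lies in the trace class. This follows from $[\chi , u], [\chi , u^{-1}] \in \mathcal{L}^{1}$ (the definition of $\mathscr{A}_{b}$, extended to $M_{n}$ coefficientwise using the matrix trace), which forces $b, c, b', c'$ to be trace class. Once this trace-class bookkeeping is cleanly stated, steps (1)–(3) above are essentially formal manipulations, and the identification with the Fedosov formula is immediate.
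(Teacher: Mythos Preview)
Your proof is correct and follows essentially the same route as the paper: both arguments reduce to the Fedosov formula $\ind(\varpi u\varpi)=\mathrm{Tr}(\varpi-\varpi u^{-1}\varpi u\varpi)-\mathrm{Tr}(\varpi-\varpi u\varpi u^{-1}\varpi)$ and identify those traces with the commutator expression defining $\zeta$. The only cosmetic difference is that you introduce explicit $2\times 2$ block notation $a,b,c,d$, whereas the paper carries the projections $\varpi$ through directly via the identity $\varpi-\varpi u^{-1}\varpi u\varpi=-\varpi[\varpi,u^{-1}][\varpi,u]\varpi$; the underlying trace-class bookkeeping and the final sign identification are identical.
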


\begin{proof}
Since $u \in GL_{n}(\mathscr{A}_{b})$ and
\[ \varpi - \varpi u^{-1} \varpi u \varpi = - \varpi [\varpi , u^{-1}][\varpi , u]\varpi , \]
so $\varpi - \varpi u^{-1} \varpi u \varpi$ and $\varpi - \varpi u \varpi u^{-1} \varpi$ 
are trace class operators on $\varpi (L^{2}(S))^{n}$. 
Therefore we get
\[ \ind (\varpi u \varpi : \varpi (L^{2}(S))^{n} \to \varpi (L^{2}(S))^{n}) 
= \mathrm{Tr}(\varpi - \varpi u^{-1} \varpi u \varpi )
	- \mathrm{Tr}(\varpi - \varpi u \varpi u^{-1} \varpi) \]
by
\cite[p.88]{MR823176}. 
So we get 
\begin{align*}
~&\ind (\varpi u \varpi : \varpi (L^{2}(S))^{n} \to \varpi (L^{2}(S))^{n}) 
= \frac{1}{4}\mathrm{Tr}(\chi [\chi, u][\chi, u^{-1}]) \\
=& \frac{1}{4}\sum_{i,j}\mathrm{Tr}(\chi [\chi , u_{ij}][\chi , (u^{-1})_{ji}]) 
= -\zeta (u^{-1},u). 
\end{align*}
\end{proof}

By this proposition and homotopy invariance of Fredholm indices, 
we get the following formula of our pairing and the Fredholm index: 
\begin{equation}
\label{mind}
\left\langle [u_{\phi}]
, \zeta \right\rangle 
	= -\frac{1}{8\pi i}\ind (\varpi u_{\phi} \varpi : \varpi (L^{2}(S))^{l} \to \varpi (L^{2}(S))^{l}).  
\end{equation}
So we shall calculate this Fredholm index.

\section{The $\mathbb{R} \times S^{1}$ case}

In this section we shall prove the $M = \mathbb{R} \times S^{1}$ case, 
and in the next section we shall reduce the general case to $M = \mathbb{R} \times S^{1}$ case. 

In this section we assume $M = \mathbb{R} \times S^{1}$. 
$\mathbb{R} \times S^{1}$ is partitioned by $(\mathbb{R}_{+} \times S^{1}, \mathbb{R}_{-} \times S^{1}, S^{1})$, 
where $\mathbb{R}_{\pm} := \{ t \in \mathbb{R} \,;\, t \geq 0 ~(\text{resp. } t \leq 0) \}$. 
Then the Dirac operator $D$ of $S = \mathbb{R} \times S^{1} \times \mathbb{C}^{2}$ 
is given by the following formula: 
\begin{equation*}
D = 
\begin{bmatrix}
0 &  -\partial / \partial t - i\partial / \partial x \\ 
\partial / \partial t - i\partial / \partial x & 0 
\end{bmatrix},
\end{equation*}
where $(t,x) \in \mathbb{R} \times S^{1}$. 
Moreover, for any $\phi \in C^{1}(S^{1} ; GL_{l}(\mathbb{C}))$, define $\phi (t,x) := \phi (x)$ 
on $\mathbb{R} \times S^{1}$. 

\subsection{Homotopy}
\label{homotopy}

To calculate $\ind (\varpi u_{\phi} \varpi : \varpi (L^{2}(S))^{l} \to \varpi (L^{2}(S))^{l})$, 
we perturb this operator by a homotopy. 
\begin{prp}
For any $s \in [0,1]$, define 
\begin{align*} 
D_{s} &:= 
	\begin{bmatrix}
	0 & -\partial / \partial t  + s/2 - i\partial / \partial x\\
	\partial / \partial t + s/2 - i\partial / \partial x & 0
	\end{bmatrix} 
= D + 
\begin{bmatrix}
0 &  s/2 \\ 
s/2 & 0 
\end{bmatrix} \\ 
\text{and~}	u_{\phi, s} &:= (D_{s} + (1-s)\epsilon )^{-1}
	\begin{bmatrix}
	\phi & 0 \\ 0 & 1 
	\end{bmatrix}
	(D_{s} + (1-s)\epsilon ). 
\end{align*}
Then $[0,1] \ni s \mapsto u_{\phi , s} \in \mathcal{L}(L^{2}(S)^{l})$ is continuous 
and $u_{\phi , s} - \begin{bmatrix} 1 & 0 \\ 0 & \phi \end{bmatrix} \in M_{l}(C^{\ast}(M))$. 
\end{prp}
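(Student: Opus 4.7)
The plan is to reduce both claims --- membership in $M_l(C^*(M))$ and norm-continuity of $s \mapsto u_{\phi,s}$ --- to analyzing the self-adjoint operator $A_s := D_s + (1-s)\epsilon$, for which we need (a) a uniform-in-$s$ lower bound on $A_s^2$ and (b) the conclusion that $A_s^{-1}$ belongs to $C^*(M)$. Once $A_s$ is under control, a short algebraic manipulation mimicking the proof of Proposition~\ref{u_phi} finishes both parts.

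First, since $D_s = D + (s/2)\sigma_1$ is still off-diagonal in the grading, $\{D_s,\epsilon\}=0$, and therefore $A_s^2 = D_s^2 + (1-s)^2$. On $M = \mathbb{R}\times S^1$ a direct computation yields $D_s^2 = -\partial_t^2 + (-i\partial_x + s/2)^2$, and decomposing $L^2(S^1)$ in the Fourier basis $\{e^{ikx}\}_{k \in \mathbb{Z}}$ reduces this to $-\partial_t^2 + (k + s/2)^2$ on the $k$-th mode. Consequently $A_s^2 \geq (k+s/2)^2 + (1-s)^2$; minimizing over $k \in \mathbb{Z}$ (attained at $k=0$) and then over $s \in [0,1]$ (attained at $s=4/5$, with value $1/5$) produces the uniform bound $A_s^2 \geq 1/5$, hence $\|A_s^{-1}\| \leq \sqrt{5}$ for every $s$.

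For the ideal membership, I would write
\[ A_s^{-1} \;=\; A_s(A_s^2)^{-1} \;=\; D_s\bigl(D_s^2+(1-s)^2\bigr)^{-1} + (1-s)\,\epsilon\,\bigl(D_s^2+(1-s)^2\bigr)^{-1}. \]
For $s \in [0,1)$ the scalar functions $x \mapsto x/(x^2+(1-s)^2)$ and $x \mapsto 1/(x^2+(1-s)^2)$ lie in $C_0(\mathbb{R})$, so Proposition~1.4(1) applied to $D_s$ places both summands in $C^*(M)$ (using that $\epsilon$ is a bounded multiplier and $C^*(M)$ is an ideal in $D^*(M)$); at $s=1$ the same argument works once $1/x^2$ is cut off to a $C_0(\mathbb{R})$ function that agrees with it on $\mathrm{spec}(D_1)\subset\{|x|\geq 1/2\}$. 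Then the same rearrangement as in Proposition~\ref{u_phi}, applied with $A_s$ in place of $D+\epsilon$, gives
\[ u_{\phi,s} \;=\; \begin{bmatrix} 1 & 0 \\ 0 & \phi \end{bmatrix} + A_s^{-1} B_s, \qquad B_s := \begin{bmatrix} (1-s)(\phi-1) & -c(\mathrm{grad}(\phi))^{-} \\ 0 & (1-s)(\phi-1) \end{bmatrix}, \]
the $(s/2)\sigma_1$-terms cancelling because $\phi$ commutes with scalars. Since $B_s$ is a bounded multiplier and $A_s^{-1} \in C^*(M)$, the remainder lies in $M_l(C^*(M))$.

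Norm-continuity falls out of the same picture: $A_s = A_0 + s(\sigma_1/2 - \epsilon)$ is Lipschitz in $s$, so the resolvent identity $A_s^{-1} - A_{s'}^{-1} = A_s^{-1}(A_{s'}-A_s)A_{s'}^{-1}$ together with $\|A_s^{-1}\| \leq \sqrt 5$ makes $s \mapsto A_s^{-1}$ Lipschitz; combined with continuity of $B_s$ this yields norm-continuity of $u_{\phi,s}$. The main obstacle is the uniform spectral bound $A_s^2 \geq 1/5$: the grading mass $(1-s)\epsilon$ that enforces invertibility at $s=0$ degenerates as $s\to 1$, and what rescues the estimate is precisely the half-integer shift $k+s/2$ produced by $(s/2)\sigma_1$ on the $S^1$-Fourier side. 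This is why the Fourier structure of $N=S^1$ is essential and why the homotopy has been packaged with this particular combination of perturbations.
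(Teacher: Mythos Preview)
Your proof is correct and follows the same architecture as the paper's: the algebraic identity $u_{\phi,s} = \mathrm{diag}(1,\phi) + A_s^{-1}B_s$, membership $A_s^{-1}\in C^*(M)$ via functional calculus of $D_s$, and norm-continuity via the resolvent identity together with a uniform bound on $\|A_s^{-1}\|$. The only notable difference is in this last step: you obtain $\|A_s^{-1}\|\le\sqrt{5}$ directly from the quadratic lower bound $A_s^2\ge s^2/4+(1-s)^2\ge 1/5$, whereas the paper instead bounds the two functional-calculus pieces $\sup_{|x|\ge s/2}|x/(x^2+(1-s)^2)|$ and $\sup_{|x|\ge s/2}|1/(x^2+(1-s)^2)|$ separately to reach $\|A_s^{-1}\|\le 15/4$; both arguments rest on the same spectral gap $D_s^2\ge s^2/4$, and yours is the more economical of the two.
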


\begin{proof}
It suffices to show the $l=1$ case. 
We note that  
$\| D_{s}f \|_{L^{2}} \geq s \| f \|_{L^{2}}/2$ for any $f \in domain(D_{s}) = domain(D)$ and $s \in (0,1]$. 
Moreover $D_{s}$ is self-adjoint. Therefore the spectrum of $D_{s}$ and  $(-s/2 , s/2)$ are disjoint, 
especially $D_{1}^{-1} \in \mathcal{L}(L^{2}(S))$. 

Since $(D_{s} + (1-s)\epsilon)^{2} = D_{s}^{2} + (1-s)^{2}$, 
so $(D_{s} + (1-s)\epsilon)^{-1} \in C^{\ast}(M)$. 
Therefore $u_{\phi , s}$ is well-defined as a densely defined closed operator of $domain(u_{\phi , s}) = domain(D)$. 
By similar proof of Proposition \ref{u_phi}, 
\[ u_{\phi , s} = \begin{bmatrix} 1 & 0 \\ 0 & \phi \end{bmatrix} + 
	(D_{s} + (1-s)\epsilon )^{-1}
	\begin{bmatrix} (1-s)(\phi - 1) & i\partial \phi/ \partial x \\ 0 & (1-s)(\phi - 1) \end{bmatrix} \]
and $u_{\phi , s} - \begin{bmatrix} 1 & 0 \\ 0 & \phi \end{bmatrix} \in C^{\ast}(M)$. 

Next we show 
$\| u_{\phi , s} - u_{\phi , s'} \| \to 0$ as $s \to s'$ for all $s' \in [0,1]$. 
First, we show that $\{ \| (D_{s} + (1-s)\epsilon )^{-1} \| \}_{s \in [0,1]}$ is a bounded set. 
Set $f_{s}(x) := \left|\frac{x}{ x^{2} + (1-s)^{2} }\right|$ 
and $g_{s}(x) := \left|\frac{1}{ x^{2} + (1-s)^{2} }\right|$. 
By fundamental calculus, we can show that
\begin{equation*}
\sup_{|x| \geq s/2} \left| f_{s}(x)\right| \leq \frac{5}{2}
\text{~~~\ and~}
\sup_{|x| \geq s/2} \left| g_{s}(x)\right| \leq \frac{5}{4}. 
\end{equation*}
Therefore 
\begin{align*}
\| (D_{s} + (1-s)\epsilon )^{-1} \| 
&\leq \|(D_{s}^{2} + (1-s)^{2})^{-1}D_{s}\| + \|(1-s)(D_{s}^{2} + (1-s)^{2})^{-1}\| \\
&\leq \sup_{|x| \geq s/2} \left| f_{s}(x)\right| + \sup_{|x| \geq s/2} \left| g_{s}(x)\right| \leq 15/4 
\end{align*}
for all $s \in [0,1]$. 

On the other hand, 
\begin{align*}
u_{\phi , s} - u_{\phi , s'} 
=& \{ (D_{s} + (1-s)\epsilon )^{-1} - (D_{s'} + (1-s')\epsilon )^{-1} \}
	\begin{bmatrix}
	(1-s)(\phi - 1) & i\phi' \\
	0 & (1-s)(\phi - 1) 
	\end{bmatrix} \\
	&+ 
	(D_{s'} + (1-s')\epsilon )^{-1}
	\begin{bmatrix}
	(s'-s)(\phi - 1) & 0 \\
	0 & (s'-s)(\phi - 1)
	\end{bmatrix} 
\end{align*}
and the second term converges to $0$ in operator norm topology as $s \to s'$, 
so we should only show that 
$\| (D_{s} + (1-s)\epsilon )^{-1} - (D_{s'} + (1-s')\epsilon )^{-1} \| \to 0$ 
as $s \to s'$. But this is proved by above uniformly boundness as follows: 
\begin{align*}
& \| (D_{s} + (1-s)\epsilon )^{-1} - (D_{s'} + (1-s')\epsilon )^{-1} \| \\
=& \| (D_{s} + (1-s)\epsilon )^{-1}((s-s')\epsilon + D_{s'} - D_{s})(D_{s'} + (1-s')\epsilon )^{-1} \| \\
\leq & \frac{3}{2} |s-s'| \| (D_{s'} + (1-s')\epsilon )^{-1} \| \| (D_{s} + (1-s)\epsilon )^{-1} \| \\
\leq & 32 |s-s'| \to 0. 
\end{align*}
\end{proof}

By this proposition, we get
\[ \ind (\varpi u_{\phi} \varpi) = \ind (\varpi u_{\phi, 0} \varpi) = \ind (\varpi u_{\phi, 1} \varpi). \]
Since
\[ \varpi u_{\phi , 1} \varpi = 
	\begin{bmatrix} 
	\varpi & 0 \\
	0 & \varpi (-\partial / \partial t + 1/2 - i\partial / \partial x)^{-1}
			\phi (-\partial / \partial t + 1/2 - i\partial / \partial x) \varpi
	\end{bmatrix} \left( =: 
	\begin{bmatrix}
	\varpi & 0 \\ 0 & \mathscr{T}_{\phi} 
	\end{bmatrix} \right) ,  \]
so $\ind (\varpi u_{\phi} \varpi)$ equals to
\[
\ind \Big( \mathscr{T}_{\phi}
	:  \varpi ( L^{2}(\mathbb{R})) \otimes L^{2}(S^{1})^{l} 
	\to \varpi ( L^{2}(\mathbb{R})) \otimes L^{2}(S^{1})^{l} \Big) , 
\]
where we assume that $\varpi$ is the characteristic function of $\mathbb{R}_{+}$. 

\subsection{The Hilbert transformation}
\label{Hilbert}

Let $\mathscr{F}$ be the Fourier transformation: 
\[ \mathscr{F}[f](\xi) := \int_{\mathbb{R}}e^{-ix\xi}f(x) dx. \]
Let $H$ be the Hilbert transformation
\footnote{The coefficient $i/\pi$ of the Hilbert transformation is usually $1/\pi$. 
But we use this coefficient $i/\pi$ because of $H^{2} = 1$. }: 
\[
Hf(t) := \frac{i}{\pi}\text{p.v.}\int_{\mathbb{R}}\frac{f(y)}{t-y}dy, 
\]
where p.v. is Cauchy's principal value. 
Then we denote by $\hat{P} : L^{2}(\mathbb{R}) \to \mathscr{H}_{-}$ the projection to the
$-1$-eigenspace $\mathscr{H}_{-}$ of $H$, 
that is, $\hat{P} := \frac{1}{2}(1-H)$. 
Since $\mathscr{F}$ is a invertible operator from $\varpi (L^{2}(\mathbb{R}))$ to $\mathscr{H}_{-}$, 
so 
\begin{align*}
& \ind \Big( \mathscr{T}_{\phi}
	: \varpi ( L^{2}(\mathbb{R})) \otimes L^{2}(S^{1}) \to \varpi ( L^{2}(\mathbb{R})) \otimes L^{2}(S^{1}) \Big) \\
=& \ind \Big( \mathscr{F}\mathscr{T}_{\phi}\mathscr{F}^{-1}
	: \mathscr{H}_{-} \otimes L^{2}(S^{1}) \to \mathscr{H}_{-} \otimes L^{2}(S^{1}) \Big) .  
\end{align*}

Set 
\[ 
\hat{\mathscr{T}_{\phi}} := \mathscr{F}\mathscr{T}_{\phi}\mathscr{F}^{-1} 
= \hat{P}(-it+ 1/2 - i\partial / \partial x )^{-1} 
\phi (-it + 1/2 - i\partial / \partial x) \hat{P}^{\ast}. 
\]
To calculate the Fredholm index of $\hat{\mathscr{T}_{\phi}}$, 
we use a basis of $L^{2}(\mathbb{R})$ made by eigenvectors of Hilbert transformation. 

\begin{prp}
\cite[Theorem 1]{MR1277773} 
Set $\rho_{n} \in L^{2}(\mathbb{R})$ by
\[ \rho_{n}(t) := \frac{(t-i)^{n}}{(t+i)^{n+1}}. \]
Then $\{ \rho_{n}/\sqrt{\pi} \}$ is a orthonormal basis of $L^{2}(\mathbb{R})$ 
and 
\begin{equation*}
H\rho_{n} = 
\begin{cases}
\rho_{n} & \text{if~} n < 0 \\
-\rho_{n} & \text{if~} n \geq 0
\end{cases}.
\end{equation*}
\end{prp}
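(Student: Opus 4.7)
My plan is to reduce everything to the standard orthonormal basis $\{z^{n}\}_{n \in \mathbb{Z}}$ of $L^{2}(S^{1})$ via the Cayley transform. Introduce the Möbius map $z(t) := (t-i)/(t+i)$, which is a diffeomorphism from $\mathbb{R}$ onto $S^{1} \setminus \{1\}$, together with the operator
\[
(Ug)(t) := \frac{1}{\sqrt{\pi}\,(t+i)}\, g\!\left(\frac{t-i}{t+i}\right).
\]
A direct computation gives $|dz/dt| = 2/(t^{2}+1)$, and combining this with the weight $1/(\pi(t^{2}+1))$ coming from the factor $1/(t+i)$ in $U$, one checks that the Jacobian cancels perfectly, so $U$ is unitary from $L^{2}(S^{1}, d\theta/(2\pi))$ onto $L^{2}(\mathbb{R}, dt)$. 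A one-line substitution then yields $U(z^{n}) = \rho_{n}/\sqrt{\pi}$. Since $\{z^{n}\}_{n\in \mathbb{Z}}$ is the standard orthonormal basis of $L^{2}(S^{1}, d\theta/(2\pi))$, the first assertion follows.

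For the Hilbert transform eigenvalue statement, I would exploit the fact that for $n \geq 0$ the function $\rho_{n}$ extends holomorphically to the upper half-plane (its only singularity is at $t=-i$), while for $n < 0$ it extends holomorphically to the lower half-plane (singularity at $t=i$); in both cases the extension is $O(1/|t|)$ at infinity, so on a semicircle of radius $R$ the integrand $\rho_{n}(y)/(t-y)$ decays like $1/R^{2}$ and contributes negligibly. By the Plemelj-Sokhotski formula — equivalently by closing the principal-value contour in the appropriate half-plane and collecting the residue at $y = t$ from an indented semicircle — any $L^{2}$ function holomorphic in the upper (resp.\ lower) half-plane is automatically an eigenvector of $H$ with eigenvalue $\pm 1$, the sign being determined by which half-plane it lives in combined with the normalization $H = (i/\pi)\,\mathrm{p.v.}\!\int\!\cdot/(t-y)\,dy$. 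Applied to $\rho_{n}$, this disposes of all $n$ simultaneously.

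The main obstacle is the sign bookkeeping in the contour argument: one must carefully orient the indented semicircle around $y=t$, close in the correct half-plane, and combine these residue contributions with the prefactor $i/\pi$ so that the final eigenvalue comes out to $\pm 1$ rather than $\pm i$. Once the sign is pinned down on a single concrete example (say $\rho_{0}$, where the partial-fraction decomposition $1/((y+i)(y-t)) = (t+i)^{-1}[(y-t)^{-1}-(y+i)^{-1}]$ reduces the calculation to evaluating two elementary limits), the general statement follows uniformly in $n$ without any further case analysis.
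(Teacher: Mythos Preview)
The paper does not supply its own proof of this proposition: it is quoted verbatim from the cited reference \cite[Theorem~1]{MR1277773} and used as a black box. So there is no argument in the paper to compare yours against.

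That said, your proposal is correct and self-contained, and it is worth noting that the Cayley-transform unitary $U$ you introduce is precisely (up to the normalizing factor $1/\sqrt{\pi}$) the map $\Phi(g)(t) := (t+i)^{-1}g(c(t))$ that the paper itself defines in the proof of the very next lemma, where the identity $\Phi(e^{inx}) = \rho_{n}$ is stated without further comment. In effect you have filled in the details the paper leaves implicit. Your contour/Plemelj--Sokhotski argument for the $H$-eigenvalue part is the standard one and goes through exactly as you describe; the decay $\rho_{n}(y) = O(1/|y|)$ is indeed enough to kill the large-arc contribution since the kernel $1/(t-y)$ supplies an extra power. One caution on the sign bookkeeping you flag: if you actually carry out the $\rho_{0}$ check with the paper's convention $H = (i/\pi)\,\mathrm{p.v.}\!\int \cdot/(t-y)\,dy$ and its Fourier transform $\mathscr{F}[f](\xi) = \int e^{-ix\xi}f(x)\,dx$, you will find $H\rho_{0} = +\rho_{0}$ (equivalently, $\mathscr{F}[\rho_{0}]$ is supported on $\xi > 0$), which is the opposite sign from what the proposition asserts. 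This is a convention mismatch in the paper's statement rather than a flaw in your method; the subsequent use of the proposition in the paper only requires that the $\rho_{n}$ split into two eigenspaces according to the sign of $n$, so nothing downstream is affected.
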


By using this basis, we get $\mathscr{H}_{-} = \mathrm{Span}_{\mathbb{C}}\{ \rho_{n} \,;\, n \geq 0 \}$ and 
we can calculate following Fredholm indices, which is used in the next subsection. 

\begin{lem}
\label{WHind}
For any $\alpha , \beta \neq 0$, 
$\hat{P} \dfrac{t+i\beta}{t+i\alpha}\hat{P}^{\ast} \in \mathcal{L}(\mathscr{H}_{-})$ 
is a Fredholm operator and 
\begin{equation*}
\ind \left(\hat{P} \frac{t+i\beta}{t+i\alpha}\hat{P}^{\ast}\right) =
\begin{cases}
0 & \text{if~} \alpha \beta > 0 \\
-1 & \text{if~} \alpha > 0 , \beta < 0 \\
1 & \text{if~} \alpha < 0 , \beta > 0
\end{cases}.
\end{equation*}
\end{lem}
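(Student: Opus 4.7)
The strategy is to identify $\mathscr{H}_-$ with the Hardy space of the upper half plane $\mathbb{H}_+ := \{\mathrm{Im}\,z > 0\}$, so the claim becomes a routine zero/pole count for a disk Toeplitz operator. Each $\rho_n$ with $n \geq 0$ extends to a bounded holomorphic function on $\mathbb{H}_+$, and the Cayley transform $w = (t-i)/(t+i)$, with inverse $t = i(1+w)/(1-w)$, gives an isometric isomorphism $\mathscr{H}_- \cong H^2(\mathbb{D})$ under which $\rho_n$ corresponds (up to a constant) to $w^n$. A short computation using the identity $t+i\gamma = i[(1+\gamma)+(1-\gamma)w]/(1-w)$ converts the multiplication operator $M_{(t+i\beta)/(t+i\alpha)}$ into multiplication on $L^2(S^1)$ by
\[
\tilde\psi(w) := \frac{(1+\beta)+(1-\beta)w}{(1+\alpha)+(1-\alpha)w},
\]
which is continuous and nowhere-vanishing on $S^1$ because $\alpha,\beta \neq 0$. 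Hence $\hat P \frac{t+i\beta}{t+i\alpha} \hat P^{\ast}$ is unitarily equivalent to the classical disk Toeplitz operator $T_{\tilde\psi}$.

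Next, the Toeplitz index theorem on $H^2(\mathbb{D})$ gives $\ind T_{\tilde\psi} = -\mathrm{wind}(\tilde\psi|_{S^1})$, and by the argument principle the winding number equals (zeros of $\tilde\psi$ in $\mathbb{D}$) minus (poles in $\mathbb{D}$). An elementary estimate yields $|-(1+\gamma)/(1-\gamma)| < 1 \iff \gamma < 0$, so the zero $-(1+\beta)/(1-\beta)$ lies in $\mathbb{D}$ iff $\beta<0$, and the pole $-(1+\alpha)/(1-\alpha)$ lies in $\mathbb{D}$ iff $\alpha<0$. Running through the four sign cases for $(\alpha,\beta)$ then reproduces the three values $0$, $-1$, $+1$ tabulated in the lemma.

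The main technical obstacle is the careful verification that the Cayley transform furnishes the above isometry and intertwines $\hat P$ with the Szeg\"o projection on $H^2(\mathbb{D})$; once this is in hand, the remainder is standard. As a self-contained alternative avoiding the classical Toeplitz theorem, one can analyse the four cases directly on $\mathscr{H}_-$: when $\alpha>0$, $\psi$ lies in $H^\infty(\mathbb{H}_+)$ so multiplication by $\psi$ preserves $\mathscr{H}_-$, and one checks it is invertible iff $\beta>0$ (index $0$) while for $\beta<0$ its range is the codimension-one subspace $\{f \in \mathscr{H}_- : f(-i\beta) = 0\}$ (index $-1$, with explicit preimage $g(t) = f(t)(t+i\alpha)/(t+i\beta)$); the two $\alpha<0$ cases follow from $\ind T_\psi = -\ind T_{\bar\psi}$ applied to the previous ones.
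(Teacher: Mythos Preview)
Your proof is correct, and it shares with the paper the key step of using the Cayley transform $w=(t-i)/(t+i)$ to carry $\mathscr{H}_{-}$ to the disk Hardy space (the paper phrases this via the isometry $\Phi:L^{2}(S^{1})\to L^{2}(\mathbb{R})$ with $\Phi(e^{inx})=\rho_{n}$), thereby reducing Fredholmness to the classical circle Toeplitz case. The genuine difference is in the index computation. The paper does not invoke the winding-number formula: instead it runs the linear homotopy $(\alpha,\beta)\leadsto(\sgn\alpha,\sgn\beta)$ through nonvanishing symbols, reducing to the three concrete functions $1$, $(t-i)/(t+i)$, $(t+i)/(t-i)$, whose Toeplitz operators on the $\{\rho_{n}\}$ basis are the identity, the unilateral shift, and the backward shift, from which the indices $0,-1,+1$ are read off directly. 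Your approach transports the symbol to $\tilde\psi(w)=\frac{(1+\beta)+(1-\beta)w}{(1+\alpha)+(1-\alpha)w}$ and applies $\ind T_{\tilde\psi}=-\mathrm{wind}(\tilde\psi)$ together with the argument principle; the inequality $|(1+\gamma)/(1-\gamma)|<1\iff\gamma<0$ then yields all cases at once. Your method is more uniform and makes the answer transparent as a zero/pole count, at the cost of citing the Toeplitz index theorem; the paper's method is more self-contained, staying entirely within the explicit $\rho_{n}$ basis.
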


\begin{proof}

Our proof of Fredholmness is similar to 
\cite[p.99]{MR771117}. 
Let $c : \mathbb{R} \to S^{1} (\subset \mathbb{C})$ be the Cayley transformation defined by
$c(t) := (t-i)(t+i)^{-1}$. 
Define $\Phi(g)(t) := (t+i)^{-1}g(c(t))$ for any $g \in L^{2}(S^{1})$. 
Then $\Phi : L^{2}(S^{1}) \to L^{2}(\mathbb{R})$
is a invertible bounded linear operator with $\| \Phi \| = 1/\sqrt{2}$ and 
$\Phi^{-1}(f)(z) = (c^{-1}(z) + i)f(c^{-1}(z))$ 
for all $f \in L^{2}(\mathbb{R})$ and $z \in S^{1} \setminus \{ 1 \}$. 

Since $\Phi (e^{inx}) = \rho_{n}$, 
$\hat{P}f\hat{P}^{\ast}$ is a Fredholm operator on $\mathscr{H}_{-}$
for any $f \in C^{\infty}(\mathbb{R} ; \mathbb{C}^{\times})$ with 
$\lim_{t \to \infty}f(t) = \lim_{t \to -\infty}f(t) \in \mathbb{C}^{\times}$. 
Now, 
\[ \left| \frac{t+i\beta}{t+i\alpha} \right|^{2} 
	= \frac{t^{2} +  \beta^{2} }{t^{2} + \alpha^{2}} > 0  \]
and $\lim_{t \to \pm \infty}\frac{t+i\beta}{t+i\alpha} = 1$. 
Therefore $\hat{P} \frac{t+i\beta}{t+i\alpha}\hat{P}^{\ast}$ is a Fredholm operator. 

We calculate $\ind (\hat{P} \frac{t+i\beta}{t+i\alpha}\hat{P}^{\ast})$. 
Define $\sgn (\alpha) := \begin{cases} 1 & \text{if~} \alpha > 0 \\ -1 & \text{if~} \alpha < 0 \end{cases}$. 
Then we define a homotopy of Fredholm operators from $\hat{P} \frac{t+i\beta}{t+i\alpha}\hat{P}^{\ast}$ to
$\hat{P} \frac{t+i\sgn (\beta )}{t+i\sgn(\alpha )}\hat{P}^{\ast}$ by 
\[ \hat{P} \frac{t+i(s\beta + (1-s)\sgn (\beta))}{t+i(s\alpha + (1-s)\sgn (\alpha))}\hat{P}^{\ast} \] 
for $s \in [0,1]$. Therefore  
\[ \ind \left(\hat{P} \frac{t+i\beta}{t+i\alpha}\hat{P}^{\ast}\right) 
	= \ind \left(\hat{P} \frac{t+i\sgn (\beta )}{t+i\sgn(\alpha )}\hat{P}^{\ast}\right) 
	= \begin{cases}
		0 & \text{if~} \alpha \beta > 0 \\
		-1 & \text{if~} \alpha > 0 , \beta < 0 \\
		1 & \text{if~} \alpha < 0 , \beta > 0
	  \end{cases}
\]
by $\mathscr{H}_{-} = \mathrm{Span}_{\mathbb{C}}\{ \rho_{n} \,;\, n \geq 0 \}$. 
\end{proof}

\subsection{The special case}

Set $\phi_{k}(x) = e^{ikx}$ on $S^{1}$ for $k \in \mathbb{Z}$. 
In this subsection, we calculate 
\[ \ind (\hat{\mathscr{T}_{\phi_{k}}} 
: \mathscr{H}_{-} \otimes L^{2}(S^{1}) \to \mathscr{H}_{-} \otimes L^{2}(S^{1}) ) \]
of the special case. 

\begin{prp}
\label{phik}
$
\ind (\varpi u_{\phi_{k}} \varpi) = \ind (\hat{\mathscr{T}_{\phi_{k}}}) = -k = \ind (T_{\phi_{k}})$. 
\end{prp}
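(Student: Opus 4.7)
The plan is to exploit two special features of the functions $\phi_{k}(x) = e^{ikx}$: each is an eigenvector of the circle Dirac operator $-i\partial / \partial x$ with eigenvalue $k$, and multiplication by $\phi_{k}$ acts as a unitary shift by $k$ on the Fourier basis $\{ e^{inx} \}_{n \in \mathbb{Z}}$ of $L^{2}(S^{1})$. Together these properties make $\hat{\mathscr{T}_{\phi_{k}}}$ block-decomposable in the $S^{1}$ direction, and each block turns out to be a rational-symbol Wiener--Hopf operator to which Lemma~\ref{WHind} applies directly.

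Concretely, I would decompose $\mathscr{H}_{-} \otimes L^{2}(S^{1}) = \bigoplus_{n \in \mathbb{Z}} V_{n}$ with $V_{n} := \mathscr{H}_{-} \otimes \mathbb{C} e^{inx}$, and unpack $\hat{\mathscr{T}_{\phi_{k}}}$ in this splitting. On $V_{n}$, the operator $-it + 1/2 - i\partial / \partial x$ acts formally as scalar multiplication by $-it + n + 1/2$, so a direct computation shows that $\hat{\mathscr{T}_{\phi_{k}}}$ sends $V_{n}$ into $V_{n+k}$ via the Wiener--Hopf operator
\[
A_{n} := \hat{P}\,\frac{t + i(n + 1/2)}{t + i(n + k + 1/2)}\,\hat{P}^{\ast}
\]
on $\mathscr{H}_{-}$, once $V_{n+k}$ is identified with $\mathscr{H}_{-}$ by removing the $e^{i(n+k)x}$ tensor factor. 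Writing $S_{k}$ for multiplication by $\phi_{k}$ on $L^{2}(S^{1})$, which is unitary, this yields a factorization $\hat{\mathscr{T}_{\phi_{k}}} = (1 \otimes S_{k}) \circ B$ with $B := \bigoplus_{n} A_{n}$ block diagonal.

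I would then apply Lemma~\ref{WHind} to each block with $\beta = n + 1/2$ and $\alpha = n + k + 1/2$. For $k > 0$ the signs of $\alpha$ and $\beta$ differ precisely when $-k \leq n \leq -1$, each such block giving $\ind (A_{n}) = -1$; for $k < 0$ they differ precisely when $0 \leq n \leq -k - 1$, each such block giving $\ind (A_{n}) = +1$; in either case all other blocks have index zero and the total is $-k$. Combined with $\ind (1 \otimes S_{k}) = 0$ and Proposition~\ref{Toep} (which supplies $\ind (T_{\phi_{k}}) = -\mathrm{deg}(e^{ikx}) = -k$), this gives the entire chain of equalities in the statement.

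The main technical obstacle is justifying the identity $\ind (B) = \sum_{n} \ind (A_{n})$ for this infinite direct sum of Fredholm operators. The key estimate is that the defining symbol satisfies $\left| \frac{t + i(n+1/2)}{t + i(n+k+1/2)} - 1 \right| \leq \frac{|k|}{|n + k + 1/2|}$ uniformly in $t \in \mathbb{R}$, so that $\| A_{n} - 1 \| \to 0$ as $|n| \to \infty$. A Neumann series argument then shows that $A_{n}$ is invertible with uniformly bounded inverse for all but finitely many $n$, whence $B$ splits as the orthogonal sum of a finite-dimensional Fredholm block and an invertible complement and is itself Fredholm with $\ind (B) = \sum_{n} \ind (A_{n})$. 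Unitarity of $1 \otimes S_{k}$ finally transfers this to $\ind (\hat{\mathscr{T}_{\phi_{k}}})$, completing the argument.
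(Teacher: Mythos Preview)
Your argument is correct and follows essentially the same route as the paper: decompose $\mathscr{H}_{-}\otimes L^{2}(S^{1})$ along the Fourier eigenspaces $E_{\lambda}=\mathbb{C}e^{i\lambda x}$, observe that $\hat{\mathscr{T}}_{\phi_{k}}$ sends $\mathscr{H}_{-}\otimes E_{\lambda}$ to $\mathscr{H}_{-}\otimes E_{\lambda+k}$ via the Wiener--Hopf operator with symbol $\dfrac{t+i(\lambda+1/2)}{t+i(\lambda+k+1/2)}$, and sum the block indices using Lemma~\ref{WHind}. The paper simply writes this infinite sum and invokes the lemma; your Neumann-series justification that all but finitely many blocks are invertible (so the direct sum is Fredholm with additive index) is a welcome additional care that the paper omits.
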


\begin{proof}
The first equality is proved in subsection \ref{homotopy} and \ref{Hilbert}, 
and the last equality is well known. 
So we should only show the second equality. 
Let $E_{\lambda} := \mathbb{C}\{ e^{i\lambda x} \}$ be 
the $\lambda$-eigenspace of $-i\partial / \partial x$. 
On $\mathscr{H}_{-} \otimes E_{\lambda}$, operator $\hat{\mathscr{T}}_{\phi_{k}}$ 
acts as 
\[ 
\hat{P}(-it + 1/2 + \lambda + k)^{-1} 
(-it + 1/2 + \lambda) \hat{P}^{\ast} \otimes \phi_{k} 
\]
and $\hat{\mathscr{T}_{\phi_{k}}}(\mathscr{H}_{-} \otimes E_{\lambda})$ 
is contained in $\mathscr{H}_{-} \otimes E_{\lambda + k}$.
Therefore 
\begin{align*}
& \ind (\hat{\mathscr{T}_{\phi_{k}}}) \\
=& \sum_{\lambda = -\infty}^{\infty} \ind {\left( \hat{P}
	\frac{t + i(\lambda + 1/2)}{t + i(\lambda + k + 1/2)} \hat{P}^{\ast} \otimes \phi_{k} :
	\mathscr{H}_{-} \otimes E_{\lambda} \to \mathscr{H}_{-} \otimes E_{\lambda + k}\right)} \\
=&  -k
\end{align*}
by Lemma \ref{WHind}.
\end{proof}

\subsection{The general case}

Let $\phi \in C^{1} (S^{1} ; GL_{l}(\mathbb{C}))$ be a general continuously differentiable map. 
We reduce $\phi$ case to $\phi_{k}$ case. 

Since the Gram-Schmidt orthogonalization $GL_{l}(\mathbb{C}) \to U(l)$ 
is a homotopy equivalence map, so 
this map induces the isomorphism on fundamental groups 
$\pi_{1}(GL_{l}(\mathbb{C})) \cong \pi_{1}(U(l))$. 
By the homotopy long exact sequence, this inclusion $i : C(S^{1} ; S^{1}) \to C(S^{1} ; U(l))$ 
of $i(f) := \begin{bmatrix} f & 0 \\ 0 & 1_{l-1} \end{bmatrix}$
induces the isomorphism on fundamental groups 
$i_{\ast}: \pi_{1}(S^{1}) \to \pi_{1}(U(l))$ 
\cite[Example 4.55]{MR1867354}. 
Moreover $\pi_{1}(S^{1}) \cong \mathbb{Z}$ is represented by $\phi_{k}$ for all $k \in \mathbb{Z}$. 
Therefore $\phi$ is homotopic to $\begin{bmatrix} \phi_{k} & 0 \\ 0 & 1_{l-1} \end{bmatrix}$ 
in $C(S^{1}\,;\,GL_{l}(\mathbb{C}))$
for some $k \in \mathbb{Z}$. We denote $\psi_{s}$ by this homotopy. 
Moreover, since $C^{1}(S^{1})$ is dense and closed under holomorphic functional calculus in $C(S^{1})$, 
so we can take this homotopy $\psi_{s}$ in $C^{1}(S^{1} ; GL_{l}(\mathbb{C}))$. 

\begin{prp}
\label{lasthomotopy}
$u_{\psi_{s}}$ is a continuous path in $\mathcal{L}(L^{2}(S)^{l})$. 
\end{prp}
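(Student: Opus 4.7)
The strategy is to reuse the explicit decomposition from the proof of Proposition \ref{u_phi}. Extending each $\psi_s$ from $S^1$ to $M = \mathbb{R} \times S^1$ by $\psi_s(t,x) := \psi_s(x)$, the same computation as in Proposition \ref{u_phi} gives
\[
u_{\psi_s} = \begin{bmatrix} 1 & 0 \\ 0 & \psi_s \end{bmatrix} + (D+\epsilon)^{-1}\begin{bmatrix} \psi_s - 1 & -c(grad(\psi_s))^{-} \\ 0 & \psi_s - 1 \end{bmatrix},
\]
so that
\[
u_{\psi_s} - u_{\psi_{s_0}} = \begin{bmatrix} 0 & 0 \\ 0 & \psi_s - \psi_{s_0} \end{bmatrix} + (D+\epsilon)^{-1}\begin{bmatrix} \psi_s - \psi_{s_0} & -c(grad(\psi_s - \psi_{s_0}))^{-} \\ 0 & \psi_s - \psi_{s_0} \end{bmatrix}.
\]

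Using $\|(D+\epsilon)^{-1}\| \leq 2$ (as in the proof of Proposition \ref{u_phi}) together with the fact that multiplication by a bounded matrix-valued function on $M$ has operator norm equal to its supremum norm, one obtains an estimate of the form
\[
\|u_{\psi_s} - u_{\psi_{s_0}}\| \leq C\bigl(\|\psi_s - \psi_{s_0}\|_{\infty} + \|grad(\psi_s - \psi_{s_0})\|_{\infty}\bigr)
\]
for a universal constant $C$ (concretely, $C = 3$ suffices). Hence the proposition reduces to showing that $s \mapsto \psi_s$ is continuous as a map $[0,1] \to C^1(S^1 ; M_l(\mathbb{C}))$.

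The main obstacle is precisely this $C^1$-continuity in $s$. The homotopy constructed from the $\pi_1$-arguments preceding the proposition (the equivalence $GL_l(\mathbb{C}) \simeq U(l)$ together with $\pi_1(U(l)) \cong \pi_1(S^1) \cong \mathbb{Z}$) is a priori only $C^0$-continuous in $s$, and the appeal to density of $C^1$ in $C^0$ only furnishes a $C^1$ representative slicewise in $s$, not joint continuity in the $C^1$ topology. To repair this, I would start from the $C^0$-continuous homotopy and smooth it uniformly in $s$ by convolving in the $S^1$-variable against a fixed smooth bump; since $[0,1]$ is compact and $GL_l(\mathbb{C})$ is open in $M_l(\mathbb{C})$, the smoothing parameter can be chosen small enough that the resulting path $\tilde\psi_s$ stays $GL_l$-valued while depending continuously on $s$ in the $C^1$ topology, with short auxiliary homotopies near $s = 0, 1$ splicing $\tilde\psi_0$ and $\tilde\psi_1$ back to the prescribed endpoints $\phi$ and $\begin{bmatrix} \phi_k & 0 \\ 0 & 1_{l-1}\end{bmatrix}$. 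Once this upgraded homotopy is in hand, the displayed inequality above closes the argument.
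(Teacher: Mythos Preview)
Your decomposition and estimate are exactly the paper's: it writes $u_{\psi_s}$ via the formula from Proposition~\ref{u_phi} and concludes in one line that $\|u_{\psi_s}-u_{\psi_{s'}}\| \le 3\|\psi_s-\psi_{s'}\|_{C^1}\to 0$.

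The only divergence is how the $C^1$-continuity of $s\mapsto\psi_s$ is secured. You treat it as an open issue and repair it by mollifying the $C^0$-homotopy in the $S^1$-variable. The paper instead handles this in the paragraph immediately preceding the proposition, where it notes that $C^1(S^1)$ is dense and closed under holomorphic functional calculus in $C(S^1)$. That clause is doing more than producing slicewise $C^1$ representatives: by the density theorem it yields $\pi_0(GL_l(C^1(S^1)))\cong\pi_0(GL_l(C(S^1)))$, so once $\phi$ and $\begin{bmatrix}\phi_k&0\\0&1_{l-1}\end{bmatrix}$ lie in the same component of $GL_l(C(S^1))$ they are already joined by a path $\psi_s$ continuous for the $C^1$-norm. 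Your mollification argument is a correct, more explicit alternative; the paper's route is shorter but leans on the abstract density machinery.
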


\begin{proof}

By proof of Proposition \ref{u_phi}, 
\[ u_{\psi_{s}} = \begin{bmatrix} 1 & 0 \\ 0 & \psi_{s} \end{bmatrix} + 
	(D + \epsilon )^{-1}\begin{bmatrix} \psi_{s} - 1 & i\psi_{s}' \\ 0 & \psi_{s} - 1 \end{bmatrix}. \]
Therefore 
\[ \| u_{\psi_{s}} - u_{\psi_{s'}} \| \leq 3 \| \psi_{s} - \psi_{s'} \|_{C^{1}}
	\to 0 \]
as $s \to s'$ since $\psi_{s}$ is a path in $C^{1}(S^{1} ; GL_{l}(\mathbb{C}))$. 
\end{proof}

\begin{proof}[Proof of Theorem \ref{mainthm} of $M = \mathbb{R} \times S^{1}$ case]

By Proposition \ref{lasthomotopy}, 
\[ 
\ind (\varpi u_{\phi} \varpi) 
	= \ind \left(\varpi \begin{bmatrix} u_{\phi_{k}} & 0 \\ 0 & 1_{l-1} \end{bmatrix} \varpi \right)
	= \ind (\varpi u_{\phi_{k}} \varpi)
\] 
for some $k \in \mathbb{Z}$. 
By Proposition \ref{phik}, 
$\ind (\varpi u_{\phi_{k}} \varpi) = \ind (T_{\phi_{k}})$. 
Moreover, since $\phi$ is homotopic to $\begin{bmatrix} \phi_{k} & 0 \\ 0 & 1_{l-1} \end{bmatrix}$, 
so $\ind (T_{\phi_{k}}) = \ind (T_{\phi})$. 
Therefore $\ind (\varpi u_{\phi} \varpi) = \ind (T_{\phi})$. 
We complete a proof of Theorem \ref{mainthm} by using (\ref{mind}) in section \ref{pairing}. 
\end{proof}

\section{The general two-manifold case}

In this section we reduce the general two dimensional manifold case to the $\mathbb{R} \times S^{1}$ case. 
Our argument is similar to Higson's argument in 
\cite{MR1113688}. 
Firstly, we shall show cobordism invariance of the pairing. 

\begin{lem}
\cite[Lemma 1.4]{MR1113688} 
\label{cobor}
Let $(M^{+}, M^{-}, N)$ and $(M^{+}{}', M^{-}{}' , N')$ be two partitions of $M$. 
Then we assume these two partitions are cobordant, that is, 
the symmetric differences $M^{\pm} \triangle M^{\mp}{}'$ are compact. 
Let $\varpi$ and $\varpi'$ be the characteristic function of $M^{+}$ and $M^{+}{}'$, respectively. 
We assume $\phi \in C^{1}(M \,;\, GL_{l}(\mathbb{C}))$ satisfies 
$\| \phi\| < \infty$, $\| grad (\phi) \| < \infty$ and $\| \phi^{-1} \| < \infty$. 
Then $\ind (\varpi u_{\phi} \varpi) = \ind (\varpi' u_{\phi} \varpi')$. 
\end{lem}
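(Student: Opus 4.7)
The plan is to reduce the comparison of two Fredholm operators living on different subspaces, $\varpi L^{2}(S)^{l}$ and $\varpi' L^{2}(S)^{l}$, to a comparison of two Fredholm operators on the common Hilbert space $L^{2}(S)^{l}$ that differ by a compact operator. Using the proof of Proposition \ref{u_phi}, decompose $u_{\phi} = v + c$, where $v := \begin{bmatrix} I_{l} & 0 \\ 0 & \phi \end{bmatrix}$ is bounded and invertible (its inverse is bounded thanks to $\| \phi^{-1} \| < \infty$) and $c := u_{\phi} - v \in M_{l}(C^{\ast}(M))$. The crucial structural feature is that $v$, being a matrix-valued multiplication operator, commutes with the scalar multiplications $\varpi$ and $\varpi'$.

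Next, I introduce the extensions
\[
\hat{u} := \varpi u_{\phi} \varpi + (1-\varpi) v (1-\varpi), \qquad \hat{u}' := \varpi' u_{\phi} \varpi' + (1-\varpi') v (1-\varpi')
\]
as bounded operators on $L^{2}(S)^{l}$. A short calculation using $[v,\varpi] = [v, \varpi'] = 0$ collapses these to $\hat{u} = v + \varpi c \varpi$ and $\hat{u}' = v + \varpi' c \varpi'$. With respect to the block decomposition $L^{2}(S)^{l} = \varpi L^{2}(S)^{l} \oplus (1-\varpi) L^{2}(S)^{l}$, the operator $\hat{u}$ restricts to $\varpi u_{\phi} \varpi$ on the first summand (Fredholm by assumption) and to the restriction of $v$ on the second (invertible, hence Fredholm of index zero). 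Therefore $\hat{u}$ is Fredholm with $\ind(\hat{u}) = \ind(\varpi u_{\phi} \varpi)$, and analogously $\ind(\hat{u}') = \ind(\varpi' u_{\phi} \varpi')$.

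Finally, a direct computation yields
\[
\hat{u} - \hat{u}' = \varpi c \varpi - \varpi' c \varpi' = (\varpi - \varpi') c \varpi + \varpi' c (\varpi - \varpi').
\]
The cobordism hypothesis places the support of $\varpi - \varpi'$ inside a compact subset of $M$, so I can choose a continuous compactly supported $g \in C_{c}(M)$ with $\varpi - \varpi' = g(\varpi - \varpi') = (\varpi - \varpi')g$. The property that $fc' \sim 0$ whenever $f \in C_{0}(M)$ and $c' \in C^{\ast}(M)$ (stated earlier among the properties of the Roe algebra) then implies that $gc$ and $cg$ are compact, so $(\varpi - \varpi') c = (\varpi - \varpi')(gc)$ and $c(\varpi - \varpi') = (cg)(\varpi - \varpi')$ are compact, and hence $\hat{u} - \hat{u}'$ is a compact perturbation. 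By compact-perturbation invariance of the Fredholm index, $\ind(\hat{u}) = \ind(\hat{u}')$, which gives the desired equality $\ind(\varpi u_{\phi} \varpi) = \ind(\varpi' u_{\phi} \varpi')$. The one genuinely delicate step is the algebraic cancellation of the $v$-contributions in the difference of the extensions; once that is secured by the fact that $v$ commutes with the scalar multiplications $\varpi$ and $\varpi'$, the remaining compactness assertion follows directly from the Roe algebra property.
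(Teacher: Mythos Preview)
Your proof is correct and follows essentially the same route as the paper: extend $\varpi u_{\phi}\varpi$ to all of $L^{2}(S)^{l}$ by inserting the invertible multiplication operator $v=\begin{bmatrix} I_{l} & 0 \\ 0 & \phi \end{bmatrix}$ on the complement, use $[v,\varpi]=0$ to simplify, and then use that $\varpi-\varpi'$ factors through a $C_{0}$-function so that its product with $c=u_{\phi}-v\in M_{l}(C^{\ast}(M))$ is compact. The only cosmetic difference is that the paper drops the right-hand $\varpi$ earlier (arriving at $v+\varpi c$ rather than your $v+\varpi c\varpi$), which lets it check compactness of $(\varpi-\varpi')c$ alone, whereas you check both $(\varpi-\varpi')c$ and $c(\varpi-\varpi')$; this is not a substantive distinction.
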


\begin{proof}
It suffices to show the $l=1$ case. 
Since $[\phi , \varpi ] = 0$ and $[u_{\phi} , \varpi ] \sim 0$, so 
\begin{align*}
 & \ind (\varpi u_{\phi} \varpi : \varpi (L^{2}(S)) \to \varpi (L^{2}(S))) \\
=& \ind \left((1- \varpi ) 
	\begin{bmatrix} 1 & 0 \\ 0 & \phi \end{bmatrix} + \varpi u_{\phi} \varpi : L^{2}(S) \to L^{2}(S) \right) \\
=& \ind \left((1- \varpi ) 
	\begin{bmatrix} 1 & 0 \\ 0 & \phi \end{bmatrix} + \varpi u_{\phi} : L^{2}(S) \to L^{2}(S) \right) \\
=& \ind \left(
	\begin{bmatrix} 1 & 0 \\ 0 & \phi \end{bmatrix} + \varpi v_{\phi} : L^{2}(S) \to L^{2}(S) \right), 
\end{align*}
where we assume $v_{\phi} = u_{\phi} - \begin{bmatrix} 1 & 0 \\ 0 & \phi \end{bmatrix} \in C^{\ast}(M)$. 
Therefore we should only show 
$\varpi v_{\phi} \sim \varpi' v_{\phi}$. 
Now, since $M^{\pm} \triangle M^{\mp}{}'$ are compact, 
there exists $f \in C_{0}(M)$ such that $\varpi - \varpi' = (\varpi - \varpi')f$. 
So $\varpi v_{\phi} - \varpi' v_{\phi} = (\varpi - \varpi')fv_{\phi} \sim 0$. 
\end{proof}

Secondly, we shall reduce the general manifold case to the $\mathbb{R} \times N$ case
used by Higson's Lemma. 

\begin{lem}
\cite[Lemma 3.1]{MR1113688} 
\label{Higson}
Let $M_{1}$ and $M_{2}$ be two partitioned oriented two dimensional complete Riemannian manifolds and
let $S_{j}$ be a Hermitian vector bundle over $M_{j}$.
Let $\varpi_{j}$ be the characteristic function of $M_{j}^{+}$. 
We assume that there exists an isometry $\gamma : M_{2}^{+} \to M_{1}^{+}$ which lifts
an isomorphism $\gamma^{\ast} : S_{1}|_{M_{1}^{+}} \to S_{2}|_{M_{2}^{+}}$. 
We denote again by $\gamma^{\ast} : \varpi_{1}(L^{2}(S_{1})) \to \varpi_{2}(L^{2}(S_{2}))$ 
the Hilbert space isometry defined by $\gamma$. 
Then we assume $u_{j} \in GL_{l}(C^{\ast}(M_{j}))$ and $f_{j} \in GL_{l}(C_{b}(M_{j}))$ 
satisfy $v_{j} := u_{j} - f_{j} \in M_{l}(C^{\ast}(M))$ and 
$\gamma^{\ast}u_{1}\varpi_{1} \sim \varpi_{2}u_{2}\gamma^{\ast}$. 
Then $\ind (\varpi_{1}u_{1}\varpi_{1}) = \ind (\varpi_{2}u_{2}\varpi_{2})$. 

Similarly, if there exists an isometry $\gamma : M_{2}^{-} \to M_{1}^{-}$ which lifts
an isomorphism $\gamma^{\ast} : S_{1}|_{M_{1}^{-}} \to S_{2}|_{M_{2}^{-}}$ and 
$\gamma^{\ast}u_{1}\varpi_{1} \sim \varpi_{2}u_{2}\gamma^{\ast}$, 
then $\ind (\varpi_{1}u_{1}\varpi_{1}) = \ind (\varpi_{2}u_{2}\varpi_{2})$.
\end{lem}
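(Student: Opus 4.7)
My plan is to use the partial isometry $W$ induced by $\gamma$ to transport the operator $\varpi_{1}u_{1}\varpi_{1}$ from $\varpi_{1}(L^{2}(S_{1}))^{l}$ onto $\varpi_{2}(L^{2}(S_{2}))^{l}$, and then to invoke the hypothesis $\gamma^{\ast}u_{1}\varpi_{1}\sim\varpi_{2}u_{2}\gamma^{\ast}$ to see that the transported operator differs from $\varpi_{2}u_{2}\varpi_{2}$ by a compact operator. The equality of indices is then forced by unitary equivalence and stability of the Fredholm index under compact perturbation.

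First I would regard $\gamma^{\ast}$ as a bounded operator $W\colon L^{2}(S_{1})^{l}\to L^{2}(S_{2})^{l}$, extended by zero on $(1-\varpi_{1})(L^{2}(S_{1}))^{l}$, and record the absorption identities $W\varpi_{1}=W=\varpi_{2}W$, $W^{\ast}W=\varpi_{1}$, and $WW^{\ast}=\varpi_{2}$. Next I would compute
\[
W(\varpi_{1}u_{1}\varpi_{1})W^{\ast}=(W\varpi_{1})\,u_{1}\,(\varpi_{1}W^{\ast})=W u_{1}\varpi_{1}W^{\ast}\sim \varpi_{2}u_{2}WW^{\ast}=\varpi_{2}u_{2}\varpi_{2},
\]
where the $\sim$ step rewrites the hypothesis as $Wu_{1}\varpi_{1}\sim\varpi_{2}u_{2}W$ and multiplies on the right by $W^{\ast}$. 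Since both sides vanish on $(1-\varpi_{2})(L^{2}(S_{2}))^{l}$, this is a genuine compact-perturbation statement on $\varpi_{2}(L^{2}(S_{2}))^{l}$.

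Fredholmness of $\varpi_{j}u_{j}\varpi_{j}$ is available because $u_{j}=f_{j}+v_{j}$ with $f_{j}\in GL_{l}(C_{b}(M_{j}))$ commuting with $\varpi_{j}$ and $v_{j}\in M_{l}(C^{\ast}(M_{j}))$, so $[\varpi_{j},u_{j}]=[\varpi_{j},v_{j}]\sim 0$ by property $(4)$ of the Roe algebra and the argument of Section \ref{pairing} applies. Combining this with unitary invariance of the index, $\ind(\varpi_{1}u_{1}\varpi_{1})=\ind(W\varpi_{1}u_{1}\varpi_{1}W^{\ast})$, and the compact-perturbation step above, yields $\ind(\varpi_{1}u_{1}\varpi_{1})=\ind(\varpi_{2}u_{2}\varpi_{2})$.

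The main obstacle I anticipate is the bookkeeping required to extend $\gamma^{\ast}$ to the full Hilbert spaces so that the absorption identities and the compact-perturbation transfer land on the correct subspaces; once these are in place the conclusion is formal. For the dual statement with $\gamma\colon M_{2}^{-}\to M_{1}^{-}$, I would mirror the construction with $1-\varpi_{j}$ in place of $\varpi_{j}$ in the definition of $W$, derive $\ind((1-\varpi_{1})u_{1}(1-\varpi_{1}))=\ind((1-\varpi_{2})u_{2}(1-\varpi_{2}))$ by the same chain of reasoning, and then convert back via the identity $\ind(\varpi_{j}u_{j}\varpi_{j})=-\ind((1-\varpi_{j})u_{j}(1-\varpi_{j}))$, which holds because $u_{j}$ is invertible modulo compacts in the splitting $1=\varpi_{j}+(1-\varpi_{j})$.
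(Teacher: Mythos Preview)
Your argument is correct and follows the same underlying idea as the paper's proof---transport via $\gamma^{\ast}$ together with stability of the Fredholm index under compact perturbation---but the implementation differs slightly. The paper does not work with $\gamma^{\ast}$ as a partial isometry on the subspaces; instead it chooses an arbitrary invertible $v:(1-\varpi_{1})(L^{2}(S_{1}))\to(1-\varpi_{2})(L^{2}(S_{2}))$, sets $V:=\gamma^{\ast}\varpi_{1}+v(1-\varpi_{1})$, and verifies that
\[
V\bigl((1-\varpi_{1})+\varpi_{1}u_{1}\varpi_{1}\bigr)-\bigl((1-\varpi_{2})+\varpi_{2}u_{2}\varpi_{2}\bigr)V\sim 0
\]
on the full Hilbert spaces, concluding via invertibility of $V$ and $\ind(\varpi_{j}u_{j}\varpi_{j})=\ind((1-\varpi_{j})+\varpi_{j}u_{j}\varpi_{j})$. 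Your route through $W(\varpi_{1}u_{1}\varpi_{1})W^{\ast}\sim\varpi_{2}u_{2}\varpi_{2}$ is a little more direct and avoids the auxiliary choice of $v$, at the cost of using that $\gamma^{\ast}$ is onto $\varpi_{2}(L^{2}(S_{2}))$ (which is true here since $\gamma$ is a diffeomorphism lifting to a bundle isomorphism). The paper does not spell out the $M^{-}$ case; your reduction via $\ind(\varpi_{j}u_{j}\varpi_{j})=-\ind((1-\varpi_{j})u_{j}(1-\varpi_{j}))$ is a clean way to handle it.
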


\begin{proof}
It suffices to show $l=1$ case. 
Let $v : (1-\varpi_{1})(L^{2}(S_{1})) \to (1-\varpi_{2})(L^{2}(S_{2}))$ 
be any invertible operator. 
Then $V := \gamma^{\ast}\varpi_{1} + v(1-\varpi_{1}) : L^{2}(S_{1}) \to L^{2}(S_{2})$ is also invertible operator. 
Hence 
\begin{align*}
& \; V((1-\varpi_{1}) + \varpi_{1}u_{1}\varpi_{1}) - ((1-\varpi_{2}) + \varpi_{2}u_{2}\varpi_{2})V \\
=& \; -\gamma^{\ast}\varpi_{1} + \varpi_{2}\gamma^{\ast} + \gamma^{\ast}\varpi_{1}u_{1}\varpi_{1}
	- \varpi_{2}u_{2}\varpi_{2}\gamma^{\ast} \\ 
\sim & \; \gamma^{\ast}u_{1}\varpi_{1} - \varpi_{2}u_{2}\gamma^{\ast} \sim 0. 
\end{align*}
Therefore $\ind (\varpi_{1}u_{1}\varpi_{1}) = \ind (\varpi_{2}u_{2}\varpi_{2})$ 
since $V$ is an invertible operator and 
$\ind (\varpi_{j}u_{j}\varpi_{j}) = \ind ((1-\varpi_{j}) + \varpi_{j}u_{j}\varpi_{j})$ 
for $j=1,2$. 
\end{proof}

In our case, we use this Lemma as follows: 

\begin{cor}
\label{ourH}
Let $M_{j}$ be a Riemannian surface and let $S_{j}$ be a
graded spin bundle over $M_{j}$ 
with grading $\epsilon_{j}$. 
We assume that there exists an isometry $\gamma : M_{2}^{+} \to M_{1}^{+}$ 
which defines the Hilbert space isometry 
$\gamma^{\ast} : \varpi_{1}(L^{2}(S_{1})) \to \varpi_{2}(L^{2}(S_{2}))$ 
as in Lemma \ref{Higson} 
and 
satisfies $D_{2} \gamma^{\ast} \sim \gamma^{\ast}D_{1}$ and 
$\epsilon_{2} \gamma^{\ast} \sim \gamma^{\ast}\epsilon_{1}$ on $\varpi_{1}(L^{2}(S_{1}))$. 
Let $\phi_{j} \in C^{1}(M_{j} \,;\, GL_{l}(\mathbb{C}))$ satisfies 
$\|\phi_{j} \| < \infty$, $\| grad (\phi_{j}) \| < \infty$ and $\| \phi_{j}^{-1} \| < \infty$ 
as in Proposition \ref{u_phi} and 
$\phi_{1}$ and $\phi_{2}$ satisfy $\phi_{1}(\gamma (x)) = \phi_{2}(x)$ for all $x \in M_{2}^{+}$. 
Then $\ind (\varpi_{1}u_{\phi_{1}}\varpi_{1}) = \ind (\varpi_{2}u_{\phi_{2}}\varpi_{2})$. 
\end{cor}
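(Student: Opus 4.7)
The plan is to invoke Lemma \ref{Higson} directly with the choices $u_j := u_{\phi_j}$ and $f_j := \begin{bmatrix} 1 & 0 \\ 0 & \phi_j \end{bmatrix} \in GL_l(C_b(M_j))$. Proposition \ref{u_phi} already supplies $v_j := u_j - f_j \in M_l(C^{\ast}(M_j))$, so the algebraic hypotheses are in place and the only nontrivial task is to verify the intertwining $\gamma^{\ast} u_{\phi_1} \varpi_1 \sim \varpi_2 u_{\phi_2} \gamma^{\ast}$.

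I would split $u_{\phi_j} = f_j + v_{\phi_j}$ and treat the two summands separately. The multiplication part is exact: since $\phi_j$ commutes with $\varpi_j$, the hypothesis $\phi_1 \circ \gamma = \phi_2$ on $M_2^{+}$ together with $\gamma^{\ast} \varpi_1 = \varpi_2 \gamma^{\ast}$ (from the definition of $\gamma^{\ast}$) immediately gives $\gamma^{\ast} f_1 \varpi_1 = \varpi_2 f_2 \gamma^{\ast}$. For the Roe-algebra part, recall from the proof of Proposition \ref{u_phi} that $v_{\phi_j} = (D_j + \epsilon_j)^{-1} A_{\phi_j}$, where $A_{\phi_j}$ is the pointwise bundle endomorphism $\begin{bmatrix} \phi_j - 1 & -c_j(\mathrm{grad}(\phi_j))^{-} \\ 0 & \phi_j - 1 \end{bmatrix}$. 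Because $\gamma$ is an isometric spin isomorphism over $M_2^{+}$, its lift intertwines the Clifford actions and the gradients, which combined with $\phi_1 \circ \gamma = \phi_2$ yields $\gamma^{\ast} A_{\phi_1} \varpi_1 = A_{\phi_2} \varpi_2 \gamma^{\ast}$ on the nose.

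Thus the verification reduces to the resolvent intertwining $\gamma^{\ast}(D_1+\epsilon_1)^{-1} \sim (D_2+\epsilon_2)^{-1} \gamma^{\ast}$ on $\varpi_1(L^2(S_1))$; assuming this, the computation
\[
\gamma^{\ast} v_{\phi_1} \varpi_1 = \gamma^{\ast}(D_1+\epsilon_1)^{-1} A_{\phi_1} \varpi_1 \sim (D_2+\epsilon_2)^{-1} A_{\phi_2} \varpi_2 \gamma^{\ast} = v_{\phi_2}\varpi_2 \gamma^{\ast} \sim \varpi_2 v_{\phi_2} \gamma^{\ast}
\]
closes out the argument, where the final step uses property $(4)$ of the Roe algebra: $[v_{\phi_2},\varpi_2]\sim 0$.

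The main obstacle is the resolvent intertwining, because the hypotheses $D_2\gamma^{\ast} \sim \gamma^{\ast} D_1$ and $\epsilon_2\gamma^{\ast} \sim \gamma^{\ast}\epsilon_1$ are stated for unbounded operators on domains and cannot be ``sandwiched'' by resolvents without care. I would handle this by writing $(D_j+\epsilon_j)^{-1} = (D_j^2+1)^{-1}(D_j+\epsilon_j)$, so that the unbounded factor acts first on $\varpi_1 L^2(S_1)$ (where the intertwining is valid) and the bounded factor $(D_j^2+1)^{-1}$ lies in $C^{\ast}(M_j)$, where finite-propagation approximations localize its action to a neighborhood of the support, along which $\gamma$ identifies the two Dirac geometries; any residual boundary contribution is a Roe-algebra element that commutes with $\varpi_j$ modulo compacts. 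Combining the three intertwinings proves $\gamma^{\ast} u_{\phi_1}\varpi_1 \sim \varpi_2 u_{\phi_2}\gamma^{\ast}$, and Lemma \ref{Higson} then yields the conclusion.
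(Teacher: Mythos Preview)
Your overall plan---verify the intertwining hypothesis $\gamma^{\ast}u_{\phi_{1}}\varpi_{1}\sim\varpi_{2}u_{\phi_{2}}\gamma^{\ast}$ of Lemma~\ref{Higson} by splitting $u_{\phi_j}=f_j+v_{\phi_j}$---is exactly the paper's, and your treatment of the multiplication piece $f_j$ and of $A_{\phi_j}$ is fine. The gap is in the resolvent step.

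Your factorization $(D_j+\epsilon_j)^{-1}=(D_j^{2}+1)^{-1}(D_j+\epsilon_j)$ runs into a domain problem: the input is $A_{\phi_1}\varpi_1\sigma$, and $\varpi_1\sigma$ is generally \emph{not} in the domain of $D_1$ because of the jump across $N$, so you cannot let the unbounded factor ``act first''. If instead you use the legitimate order $(D_j+\epsilon_j)(D_j^{2}+1)^{-1}$, the unbounded factor acts last and the hypothesis $D_2\gamma^{\ast}\sim\gamma^{\ast}D_1$ is no longer directly applicable. Your appeal to finite-propagation approximations does not repair this: the sections you feed into $(D_1^{2}+1)^{-1}$ are supported right up to $\partial M_1^{+}$, so any finite-propagation approximant already pushes mass across $N$, and the phrase ``residual boundary contribution is a Roe-algebra element that commutes with $\varpi_j$ modulo compacts'' is doing work you have not justified.

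The device the paper uses to fix exactly this point is to replace $\varpi_j$ by a \emph{smooth} function $\varphi_j$ supported in $M_j^{+}$ with $\varphi_j=\varpi_j$ off a compact set (so $\varpi_j v_{\phi_j}\sim\varphi_j v_{\phi_j}$, using $C_0\cdot C^{\ast}(M)\sim 0$). With $\varphi_1$ smooth one can legitimately sandwich:
\[
\gamma^{\ast}(D_1+\epsilon_1)^{-1}\varphi_1-\varphi_2(D_2+\epsilon_2)^{-1}\gamma^{\ast}
\sim (D_2+\epsilon_2)^{-1}\bigl\{(D_2+\epsilon_2)\gamma^{\ast}\varphi_1-\gamma^{\ast}\varphi_1(D_1+\epsilon_1)\bigr\}(D_1+\epsilon_1)^{-1},
\]
and the brace reduces (using $D_2\gamma^{\ast}\sim\gamma^{\ast}D_1$, $\epsilon_2\gamma^{\ast}\sim\gamma^{\ast}\epsilon_1$) to $\gamma^{\ast}[D_1,\varphi_1]=\gamma^{\ast}c(\mathrm{grad}\,\varphi_1)$, which has \emph{compact} support; composed with $(D_1+\epsilon_1)^{-1}\in C^{\ast}(M_1)$ this is compact. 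That is the missing idea in your argument.
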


\begin{proof}
We should only show $\gamma^{\ast}u_{\phi_{1}}\varpi_{1} \sim \varpi_{2} u_{\phi_{2}} \gamma^{\ast}$. 
Let $\varphi_{1}$ be a smooth function on $M_{1}$ such that 
$\mathrm{Supp} (\varphi_{1}) \subset M_{1}^{+}$ and there exists 
a compact set $K_{1} \subset M_{1}$ such that $\varphi_{1} = \varpi_{1}$ on $M_{1} \setminus K_{1}$. 
Define $\varphi_{2}(x) := \varphi_{1}(\gamma (x))$ for all $x \in M_{2}^{+}$ 
and $\varphi_{2} = 0$ on $M_{2}^{-}$. 
Then $\varphi_{2}$ is a smooth function on $M_{2}$ such that 
$\mathrm{Supp} (\varphi_{2}) \subset M_{2}^{+}$ and there exists 
a compact set $K_{2} \subset M_{2}$ such that $\varphi_{2} = \varpi_{2}$ on $M_{2} \setminus K_{2}$. 
Set 
$v_{\phi_{j}} = u_{\phi_{j}} - \begin{bmatrix} 1 & 0 \\ 0 & \phi_{j} \end{bmatrix}$. 
Then $\gamma^{\ast}v_{\phi_{1}}\varpi_{1} \sim \gamma^{\ast}v_{\phi_{1}}\varphi_{1}$ and 
$\varpi_{2}v_{\phi_{2}}\gamma^{\ast} \sim \varphi_{2}v_{\phi_{2}}\gamma^{\ast}$. 
So if $\gamma^{\ast}v_{\phi_{1}}\varphi_{1} \sim \varphi_{2}v_{\phi_{2}}\gamma^{\ast}$, then 
\begin{equation*}
\gamma^{\ast}u_{\phi_{1}}\varpi_{1}
\sim
\gamma^{\ast}v_{\phi_{1}}\varphi_{1} + \gamma^{\ast}\begin{bmatrix} 1 & 0 \\ 0 & \phi_{1} \end{bmatrix}\varpi_{1}
\sim
\varphi_{2}v_{\phi_{2}}\gamma^{\ast} + \varpi_{2}\begin{bmatrix} 1 & 0 \\ 0 & \phi_{2} \end{bmatrix}\gamma^{\ast}
\sim
\varpi_{2} u_{\phi_{2}} \gamma^{\ast}.
\end{equation*}
So we shall show $\gamma^{\ast}v_{\phi_{1}}\varphi_{1} \sim \varphi_{2}v_{\phi_{2}}\gamma^{\ast}$. 
In fact, 
\allowdisplaybreaks
\begin{align*}
 & \; \gamma^{\ast}v_{\phi_{1}}\varphi_{1} - \varphi_{2}v_{\phi_{2}}\gamma^{\ast} \\
=& \; \gamma^{\ast}(D_{1} + \epsilon_{1} )^{-1}
	\begin{bmatrix} \phi_{1} - 1 & -c(grad (\phi_{1} ))^{-} \\ 0 & \phi_{1} - 1 \end{bmatrix}\varphi_{1}
	- 
	\varphi_{2}(D_{2} + \epsilon_{2} )^{-1}
	\begin{bmatrix} \phi_{2} - 1 & -c(grad (\phi_{2} ))^{-} \\ 0 & \phi_{2} - 1 \end{bmatrix}\gamma^{\ast} \\
=& \; \{\gamma^{\ast}(D_{1} + \epsilon_{1} )^{-1}\varphi_{1} - \varphi_{2}(D_{2} + \epsilon_{2} )^{-1}\gamma^{\ast}\}
	\begin{bmatrix} \phi_{1} - 1 & -c(grad (\phi_{1} ))^{-} \\ 0 & \phi_{1} - 1 \end{bmatrix} \\
\sim & \; \{\gamma^{\ast}\varphi_{1}(D_{1} + \epsilon_{1} )^{-1} 
		- (D_{2} + \epsilon_{2} )^{-1}\gamma^{\ast}\varphi_{1}\}
	\begin{bmatrix} \phi_{1} - 1 & -c(grad (\phi_{1} ))^{-} \\ 0 & \phi_{1} - 1 \end{bmatrix} \\
= & \; (D_{2} + \epsilon_{2})^{-1}
	\{(D_{2} + \epsilon_{2})\gamma^{\ast}\varphi_{1} 
			- \gamma^{\ast}\varphi_{1}(D_{1} + \epsilon_{1})\}(D_{1} + \epsilon_{1})^{-1}
	\begin{bmatrix} \phi_{1} - 1 & -c(grad (\phi_{1} ))^{-} \\ 0 & \phi_{1} - 1 \end{bmatrix} \\
\sim & \; (D_{2} + \epsilon_{2})^{-1}\gamma^{\ast}[D_{1}, \varphi_{1}](D_{1} + \epsilon_{1})^{-1}
	\begin{bmatrix} \phi_{1} - 1 & -c(grad (\phi_{1} ))^{-} \\ 0 & \phi_{1} - 1 \end{bmatrix} \\
\sim & \; 0 
\end{align*}
since $grad (\varphi_{1})$ has a compact support and $[D_{1}, \varphi_{1}] = c(grad (\varphi_{1}))$. 
Thus we get $\gamma^{\ast}u_{\phi_{1}}\varpi_{1} \sim \varpi_{2} u_{\phi_{2}} \gamma^{\ast}$. 
Therefore $\ind (\varpi_{1}u_{\phi_{1}}\varpi_{1}) = \ind (\varpi_{2}u_{\phi_{2}}\varpi_{2})$ 
by Lemma \ref{Higson}. 
\end{proof}

\begin{proof}[Proof of Theorem \ref{mainthm}]
Firstly, let $a \in C^{\infty}([-1,1];[-1,1])$ satisfies 
\[ a(t) = 
\begin{cases}
-1 & \text{if~} -1 \leq t \leq -3/4 \\
0 & \text{if~} -2/4 \leq t \leq 2/4 \\
1 & \text{if~} 3/4 \leq t \leq 1
\end{cases}. 
\]
Let $(-4\delta , 4\delta ) \times N$ be a tubular neighborhood of $N$ in $M$ satisfies 
\[
\sup_{(t,x),(s,y) \in [-3\delta , 3\delta ] \times N} |\phi (t,x) - \phi (s,y)| < \| \phi^{-1} \|^{-1}. 
\]
Define $\psi (t,x) := \phi (4\delta a(t) , x)$ on $(-4\delta , 4\delta ) \times N$ and 
$\psi = \phi$ on $M \setminus (-4\delta , 4\delta ) \times N$. 
Then $\psi \in C^{1}(M \,;\, GL_{l}(\mathbb{C}))$ and $\| \psi - \phi \| < \| \phi^{-1} \|^{-1}$. 
Thus 
$[0,1] \ni t \mapsto \psi_{t} := t\psi + (1-t)\phi$  
satisfies $\psi_{t} \in C^{1}(M \,;\, GL_{l}(\mathbb{C}))$, 
$\| \psi_{t} \| < \infty$, $\| grad (\psi_{t}) \| < \infty$ and 
$\| \psi_{t}^{-1} \| < \infty$, 
$\| \psi_{t} - \psi_{t'} \| \to 0$ and $\| grad (\psi_{t}) - grad (\psi_{t'}) \| \to 0$ 
as $t \to t' \in [0,1]$. 
Therefore we should only show the case of which $\phi$ satisfies
$\phi (t,x) = \phi (0,x)$ on tubular neighborhood $(-2\delta , 2\delta) \times N$. 
By Lemma \ref{cobor} we may change a partition of $M$ 
to $(M^{+} \cup ([-\delta , 0] \times N), M^{-}\setminus ((-\delta, 0] \times N), \{ -\delta\} \times N)$ 
without changing $\ind (\varpi u_{\phi} \varpi)$. 
Then by Corollary \ref{ourH} we may change $M^{+} \cup ([-\delta , 0] \times N)$ to 
$[-\delta , \infty ) \times N$ without changing $\ind (\varpi u_{\phi} \varpi)$ 
where $\phi$ is changed to $\phi (t,x) = \phi (x)$ for $(t,x) \in (-\delta, 0] \times N$. 
We denote by 
$M' := ([-\delta , \infty ) \times N) \cup (M^{-}\setminus ((-\delta, 0] \times N))$ 
this manifold. 
Then $M'$ is partitioned by 
$([-\delta , \infty ) \times N, M^{-}\setminus ((-\delta, 0] \times N), \{ -\delta \} \times N)$. 
By using Lemma \ref{cobor} and Corollary \ref{ourH} again, 
we may change $M'$ to $\mathbb{R} \times N$ without changing $\ind (\varpi u_{\phi} \varpi)$ 
with similar argument as above where $\phi (t,x) = \phi (x)$ for $(t,x) \in \mathbb{R} \times N$. 
Now we have changed $M$ to $\mathbb{R} \times N = \mathbb{R} \times S^{1}$. 

\end{proof}

\section{Example}

In this section, we see a example of a partitioned manifold with which
the pairing $\langle [u_{\phi}] , \zeta \rangle$ is not zero 
but it is not $\mathbb{R} \times S^{1}$. 

Firstly, we define partitioned two-manifold. 
Let $\Sigma_{2}$ be a closed Riemannian surface of genus two and 
let $C$ and $C'$ be two submanifolds of $\Sigma_{2}$ which define 
generators of $H_{1}(\Sigma_{2} ; \mathbb{Z})$ as Figure \ref{fig:genus_2}. 
Moreover, we cut $\Sigma_{2}$ along $C$ and $C'$ and 
embed to $\mathbb{R}^{3}$ like Figure \ref{fig:Salpha}. 

\begin{figure}[h]
 \begin{minipage}{0.45\hsize}
  \begin{center}
   \includegraphics[width=50mm]{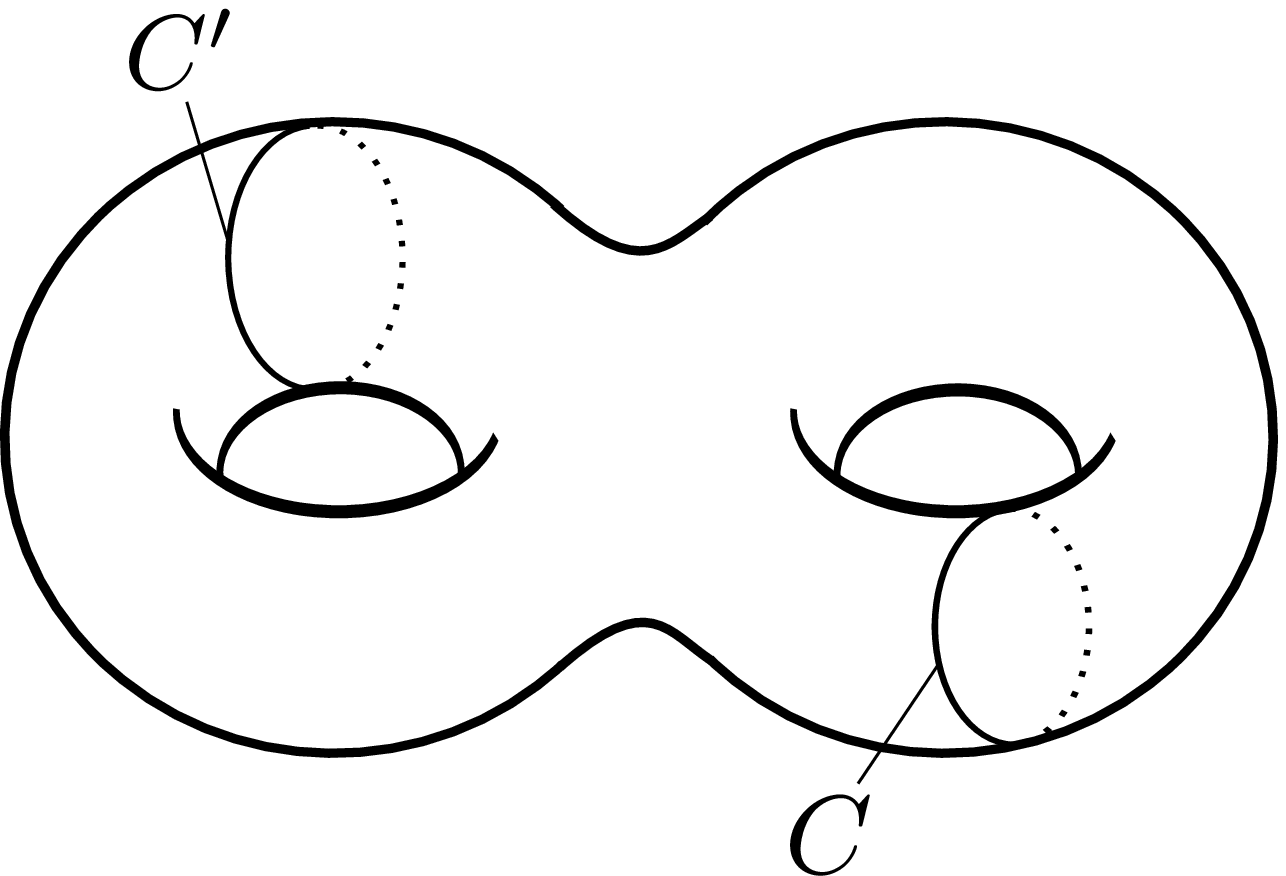}
  \end{center}
  \caption{~}
  \label{fig:genus_2}
 \end{minipage}
 \begin{minipage}{0.45\hsize}
  \begin{center}
   \includegraphics[width=65mm]{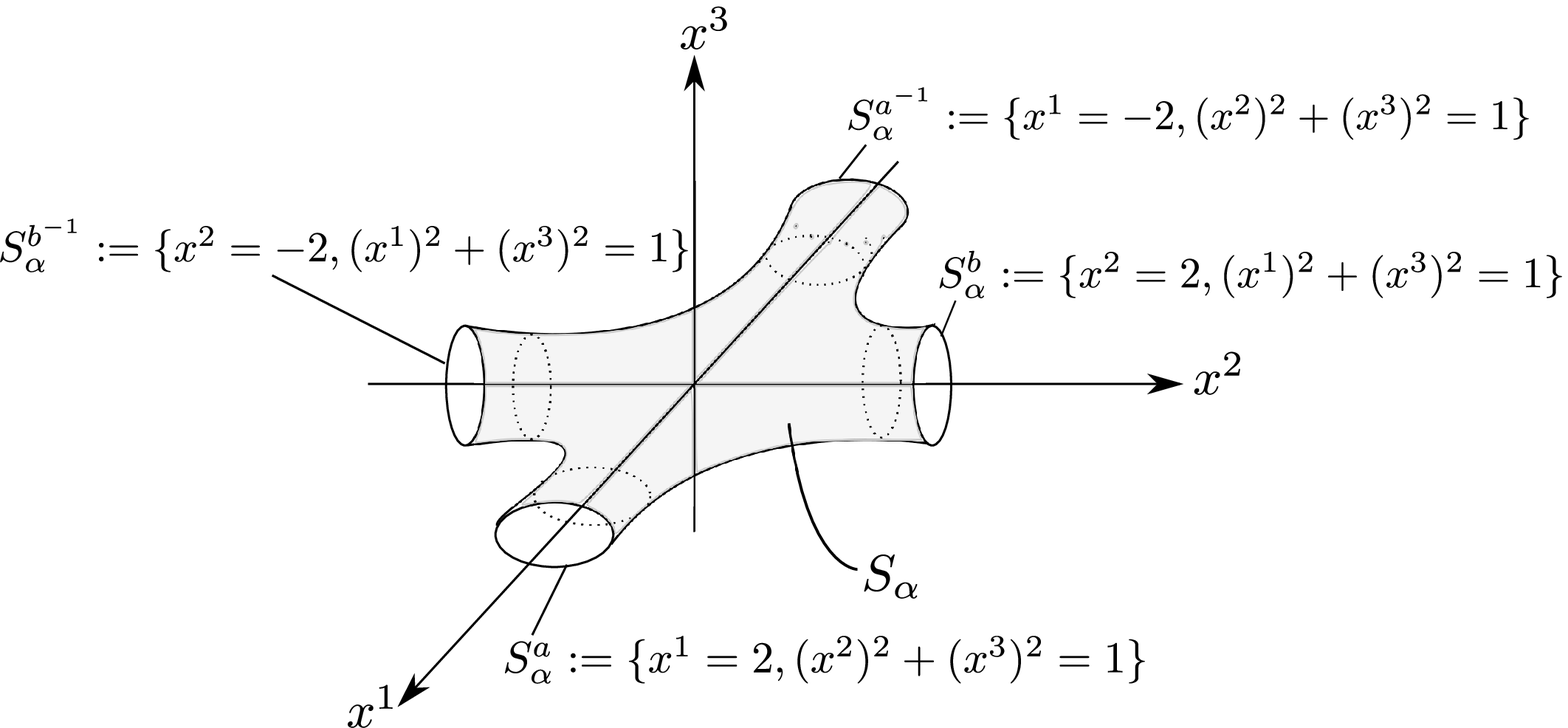}
  \end{center}
  \caption{~}
  \label{fig:Salpha}
 \end{minipage}
\end{figure}

Let $F_{2} := \langle a,b \rangle$ be the free group with two generators. 
For all $\alpha \in F_{2}$, we consider such surface $S_{\alpha}$. 
Then we assume $S_{\alpha}$ is a oriented smooth manifold with Riemannian metric induced by $\mathbb{R}^{3}$. 
Moreover we assume 
$T_{\alpha}^{g}(1/2)$ is a collar neighborhood of $S_{\alpha}^{g}$ in $S_{\alpha}$ 
for $g \in \{ a,a^{-1},b,b^{-1} \}$. 
Where for all $\delta > 0$, we define
\[
T_{\alpha}^{a}(\delta) := \{ 2-\delta < x^{1} \leq 2, (x^{2})^{2} + (x^{3})^{2} = 1 \}, 
\]
\[
T_{\alpha}^{a^{-1}}(\delta) := \{ -2 \leq x^{1} < -2 + \delta, (x^{2})^{2} + (x^{3})^{2} = 1 \}, 
\]
\[
T_{\alpha}^{b}(\delta) := \{ 2-\delta < x^{2} \leq 2, (x^{1})^{2} + (x^{3})^{2} = 1 \} \text{~, and} 
\]
\[
T_{\alpha}^{b^{-1}}(\delta) := \{ -2 \leq x^{2} < -2 + \delta, (x^{1})^{2} + (x^{3})^{2} = 1 \}. 
\]

Let $\phi_{\alpha}^{g} : \overline{T_{\alpha}^{g}(1/4)} \to [0,1/4] \times S^{1}$ be 
the orientation preserving isometry defined by identification of 
$\overline{T_{\alpha}^{g}(1/4)}$ with $[0,1/4] \times S^{1}$. 
Define $M := \bigsqcup_{\alpha}S_{\alpha}/\sim$, where 
$x \sim y$ if 
(i) $x \in \overline{T_{\alpha}^{g}(1/4)}$ and $y \in \overline{T_{\beta}^{h}(1/4)}$, 
(ii) $\alpha g = \beta h$ and  
(iii) $\phi_{\alpha}^{g}(x) = \phi_{\beta}^{h}(y)$. 
Then $M$ is a oriented complete Riemannian manifold. 
Moreover, $F_{2}$ acts on $M$ freely. 
Let $\pi : M \to M/F_{2} = \Sigma_{2}$ be the quatient map of this action. 
We note that this manifold $M$ forms like ``the boundary of a fat picture'' of Cayley graph of $F_{2}$. 

Let $N \subset \pi^{-1}(C)$ be a connected component of $\pi^{-1}(C)$, 
where $C := \pi (S_{\alpha}^{a})$. 
Then $M$ is separated two components by $N$. 
So we can define $M^{+}$ and $M^{-}$ which satisfy $N = \partial M^{-}$. 
Therefore $M$ is a partitioned manifold. 

On the other hand, there exists continuously differentiable map 
$\varphi : \Sigma_{2} \to GL_{l}(\mathbb{C})$ such that 
$\mathrm{deg}(\det (\varphi |_{C})) \neq 0$ 
since $[C] \neq 0$. 
For example, we choose $\varphi : S^{1} \to GL_{l}(\mathbb{C})$ such that $\mathrm{deg}(\det (\varphi)) \neq 0$, 
and we extend on $T^{2} = S^{1} \times S^{1}$ trivially. 
Then we can define such $\varphi$ on $\Sigma_{2}$ through $\Sigma_{2} = T^{2} \# T^{2}$. 
Define $\phi := \varphi \circ \pi$, then $\phi$ satisfies 
assumptions of Theorem \ref{mainthm}. 

In above setting, $\deg (\det (\phi |_{N})) = \deg (\det (\varphi |_{C}))$. 
Therefore $\langle [u_{\phi}] , \zeta \rangle \neq 0$. 

\vspace*{1\baselineskip}

\textit{Acknowledgments} - The author would like to thank 
his advisor Professor Hitoshi Moriyoshi for helpful conversations.

\bibliographystyle{jplain}
\bibliography{referrences_S1}

\end{document}